\newtheoremstyle{example}
{}                % Space above
{}                % Space below
{\sffamily}        % Theorem body font % (default is "\upshape")
{}                % Indent amount
{\bfseries}       % Theorem head font % (default is \mdseries)
{.}               % Punctuation after theorem head % default: no punctuation
{ }               % Space after theorem head
{}                % Theorem head spec
\newtheoremstyle{example}
{}                % Space above
{}                % Space below
{\sffamily}        % Theorem body font % (default is "\upshape")
{}                % Indent amount
{\bfseries}       % Theorem head font % (default is \mdseries)
{.}               % Punctuation after theorem head % default: no punctuation
{ }               % Space after theorem head
{}                % Theorem head spec
\newtheorem{theorem}{Theorem}
\newtheorem{lemma}[theorem]{Lemma}
\newtheorem{corollary}[theorem]{Corollary}
\theoremstyle{definition}
\newtheorem{definition}[theorem]{Definition}
\newtheorem{remark}[theorem]{Remark}
\newtheorem{example}[theorem]{Example}
\newtheorem{conjecture}[theorem]{Conjecture}
\numberwithin{theorem}{section}
\numberwithin{theorem}{section}
\DeclareMathOperator{\Hom}{Hom}
\DeclareMathOperator{\id}{id}
\DeclareMathOperator{\im}{im}
\DeclareMathOperator{\GL}{GL}
\DeclareMathOperator{\End}{End}
\DeclareMathOperator{\Gal}{Gal}
\DeclareMathOperator{\Nm}{Nm}
\DeclareMathOperator{\Br}{Br}
\DeclareMathOperator{\NS}{NS}
\DeclareMathOperator{\Pic}{Pic}
\DeclareMathOperator{\rad}{rad}
\DeclareMathOperator{\MT}{MT}
\newcommand{\co}{\mathcal{O}}
\newcommand{\Z}{\mathbb{Z}}
\newcommand{\R}{\mathbb{R}}
\newcommand{\Q}{\mathbb{Q}}
\newcommand{\C}{\mathbb{C}}
\newcommand{\F}{\mathbb{F}}
\newcommand{\p}{\mathfrak{p}}
\newcommand{\A}{\mathbb{A}}
\newcommand{\Hdg}{\mathrm{Hdg}}
\title[Brauer groups of principal CM K3 surfaces]{Explicit bounds on the transcendental Brauer group of K3 surfaces with principal complex multiplication}
\author{Sebastian Monnet}
\begin{document}
\begin{abstract}
    Let $X$ be a K3 surface defined over a number field $k$, with principal complex multiplication by a CM field $E$. We find explicit bounds, in terms of $k$ and $E$, on the size of the transcendental Brauer group $\Br(X)/\Br_1(X)$ of $X$. Bounding the size of this group is important for computing the Brauer--Manin obstruction, which is conjectured by Skorobogatov to be the only obstruction to the Hasse principle for K3 surfaces. Our methods are built on top of earlier work by Valloni, who related the group $\Br(X)/\Br_1(X)$ to the arithmetic structure of the CM field $E$. It is from this arithmetic structure that we deduce our bounds.    
\end{abstract}

\maketitle
\tableofcontents

\clearpage
\section{Introduction}
\label{sec-intro}
In the rational points community, K3 surfaces are one of the most studied types of variety. They are simultaneously nice enough to work with and nasty enough that there are plenty of open problems. One such open problem is the following conjecture by Skorobogatov, which can be found in \cite[Page~1859]{skorobogatov-conjecture}:
\begin{conjecture}[Skorobogatov]
    \label{conj-skorobogatov-k3}
The Brauer--Manin obstruction is the only obstruction to the Hasse principle for K3 surfaces.
\end{conjecture}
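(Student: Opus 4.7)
The plan is to attack Skorobogatov's conjecture by first reducing to the CM locus and then propagating the result across the moduli space via deformation. The reason to start with K3 surfaces that admit principal complex multiplication by a CM field $E$ is that, by the work of Valloni extended in the present paper, the transcendental Brauer group $\Br(X)/\Br_1(X)$ is then controlled by explicit Hecke characters on $E$, which makes the Galois action and cohomological structure concrete enough to manipulate directly.

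First, for $X/k$ with principal CM by $E$, I would reformulate the Brauer--Manin condition in terms of arithmetic in $E$. Using the identification of $\Br(X)/\Br_1(X)$ with a group of Hecke-type homomorphisms of $\Gal(\bar{k}/k)$, an adelic point $(x_v)\in X(\mathbb{A}_k)^{\Br}$ forces the vanishing of an obstruction living in a ray class group of $E$. The key step here is to upgrade this vanishing to the existence of a CM point on $X$ defined over an explicitly controlled abelian extension of the reflex field, using Shimura's reciprocity law as the bridge from adelic vanishing to a rational CM point on the relevant Shimura variety of K3 type.

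Second, I would handle non-CM $k$-points via descent. Following the Colliot-Th\'el\`ene--Sansuc philosophy, any failure of the Hasse principle surviving the Brauer--Manin filter should be detectable on a universal torsor $Y\to X$ under the N\'eron--Severi torus; combining Harpaz--Skorobogatov-style fibration results with the Kuga--Satake correspondence, which realises $X$ inside an abelian variety $A$, I would transfer the question to rational points on $A$ and apply isogeny descent. To extend from CM K3 surfaces to the full moduli space, I would invoke the density of CM points in the moduli of K3 surfaces together with a specialisation argument, so that a Hasse-principle failure for a generic $X/k$ would lift through a nearby CM fibre, where step one applies.

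The dominant obstacle, by a wide margin, is the transfer to abelian varieties: the analogous Brauer--Manin conjecture for abelian varieties is itself open and is essentially equivalent to finiteness of the Tate--Shafarevich group for the CM abelian varieties of Kuga--Satake type that arise here, so this step will almost certainly only produce a result conditional on finiteness of $\text{\cyrillic{Sh}}$. A secondary but still serious difficulty is that Kuga--Satake is not canonical at the level of $k$-points, so transferring the explicit adelic obstruction from $X$ to $A$ requires careful bookkeeping of level structures and Mumford--Tate groups; and the CM-to-general specialisation is non-formal because the CM locus meets each Hodge-generic deformation only in a countable union of codimension-positive subvarieties. Realistically, the cleanest output of this plan is the conjecture for principal CM K3 surfaces conditional on finiteness of Tate--Shafarevich for Kuga--Satake abelian varieties, with the non-CM case genuinely open.
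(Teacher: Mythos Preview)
The statement you are attempting to prove is a \emph{conjecture}, and the paper does not claim to prove it. It appears as Conjecture~\ref{conj-skorobogatov-k3} purely as motivation: if it holds, then bounding $\Br(X)/\Br_0(X)$ effectively would yield an effective procedure for deciding the existence of rational points on K3 surfaces, and the paper's actual contribution is to produce explicit bounds on the transcendental piece $\Br(X)/\Br_1(X)$ in the principal CM case. There is therefore no ``paper's own proof'' of this statement to compare against.

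As to your proposal itself, it is not a proof but a speculative research programme, and you have already named the fatal gap: the transfer through Kuga--Satake reduces the question to the Brauer--Manin conjecture for abelian varieties, which is itself open and essentially equivalent to finiteness of the Tate--Shafarevich group. Beyond this, two further steps are unsubstantiated. The claim that adelic vanishing in a ray class group of $E$ can be ``upgraded via Shimura reciprocity'' to an actual $k$-rational point on $X$ has no mechanism behind it: Shimura reciprocity describes the Galois action on special points of a Shimura variety, it does not manufacture rational points from cohomological vanishing conditions. And the specialisation argument from the CM locus to generic K3 surfaces cannot work as stated, for the reason you yourself raise---the CM locus is a countable union of positive-codimension subvarieties, so a putative counterexample at a non-CM point of moduli simply does not specialise to any CM fibre. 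At best your outline would yield, conditionally on finiteness of Tate--Shafarevich for Kuga--Satake abelian varieties, a partial result in the CM case; it is not a proof of the conjecture.
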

If Conjecture~\ref{conj-skorobogatov-k3} is true, then for a K3 surface $X$ over a number field $k$, we can determine whether $X$ has $k$-rational points by computing its Brauer--Manin set $X(\A_k)^{\Br(X)}$. In light of \cite[Theorem~1]{kresch-tschinkel}, this computation can be performed effectively\footnote{Meaning that its running time can be bounded explicitly in terms of $X$.} if the size of $\Br(X) / \Br_0(X)$ can be bounded effectively, where $\Br(X)$ is the Brauer group of $X$ and $\Br_0(X)$ is the image of the natural map $\Br(k) \to \Br(X)$. Fix an algebraic closure $\overline{k}$ of $k$, let $G_k = \Gal(\overline{k}/k)$, and let $\overline{X}$ be the base change $X_{\overline{k}}$ of $X$ to $\overline{k}$. Writing
$$
\Br_1(X) = \ker\Big(\Br(X) \to \Br(\overline{X})^{G_k}\Big),
$$
we have subgroups 
$$
\Br_0(X) \subseteq \Br_1(X) \subseteq \Br(X). 
$$
The algebraic Brauer group $\Br_1(X)/\Br_0(X)$ has already been uniformly bounded (see \cite[Remark~1.4]{bound-on-algebraic-brauer}), so it suffices to find an effective bound on the transcendental Brauer group $\Br(X) / \Br_1(X)$. In the case where $X$ has complex multiplication by the ring of integers of a CM field $E$, Valloni \cite{valloni-algorithm} gives an algorithm for computing a bound on the size of $\Br(X) / \Br_1(X)$ in terms of $E$ and the field of definition of $X$. This algorithm has not been implemented for general $E$, so in practice it is useful to have an explicit bound on the sizes it may produce. 

In this paper, we analyse Valloni's algorithm to deduce an explicit bound on its output. Let $E$ be a CM field and let $F$ be its maximal totally real subfield. Using the standard notation set out in Section~\ref{subsec-nf-notation}, we define
$$
M_E = \frac{[\co_E^\times : \co_F^\times]h_F}{h_E} \cdot 2^{\omega(d_{E/F}) + \omega(d_{E/F,2})} \cdot \sqrt{\Nm(d_{E/F,2})}.
$$
We obtain the following explicit bound on the transcendental Brauer group:
\begin{theorem}
    \label{thm-explicit-bound-without-phi-and-psi}
    Let $k$ be a number field and let $E$ be a CM field with maximal totally real subfield $F$. Then every K3 surface $X/k$ with CM by $\co_E$ has
    $$
    \#\big(\Br(X)/\Br_1(X)\big) \leq 3^{[E:\Q]}\cdot \Nm(\rad(d_{E/F})) \cdot ([k:\Q]M_E)^{2\log_2 3}.
    $$ 
\end{theorem}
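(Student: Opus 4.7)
My plan is to trace Valloni's algorithm from \cite{valloni-algorithm} step by step, bounding every intermediate group it constructs in terms of standard number-theoretic invariants of $E$, $F$, and $k$. Since Valloni proves that $\Br(X)/\Br_1(X)$ is a subquotient of the algorithm's output, a bound on the latter automatically gives a bound on the former. The three factors on the right-hand side of the claimed inequality should correspond to three structurally distinct pieces of the construction, and the proof will proceed by treating them one at a time before combining the estimates multiplicatively.

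First, I would identify a ``lattice'' contribution, $3^{[E:\Q]}$. Under the principal CM hypothesis, the transcendental lattice of $X$ is a free $\co_E$-module of rank one, so an $\F_3$-vector-space quotient of such a module has size at most $3^{[E:\Q]}$; I expect Valloni's algorithm to bound the 3-primary part of its output by precisely such a quotient. Second, the factor $\Nm(\rad(d_{E/F}))$ should arise as a product of local factors indexed by the primes of $F$ that ramify in $E/F$, each controlled by the residue characteristic at the prime. Third, the class-field-theoretic factor $([k:\Q]M_E)^{2\log_2 3}$ must absorb the descent from $\overline{k}$ down to $k$ together with a comparison of ideals in $E$ versus $F$; the ingredients of $M_E$ arise in a natural way from Dirichlet's unit theorem (giving the index $[\co_E^\times : \co_F^\times]$), the standard relation between the class groups of $E$ and $F$ (giving the ratio $h_F/h_E$), and a careful analysis of ramification at the prime $2$ (giving the discriminant contributions $2^{\omega(d_{E/F})+\omega(d_{E/F,2})}\sqrt{\Nm(d_{E/F,2})}$).

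The main obstacle is obtaining the exponent $2\log_2 3$ in a clean form. A naive bound on the size of a Galois-fixed subgroup is the size of the whole module, not a polynomial with fractional exponent, so a finer structural argument must be invoked. My expectation is that the exponent arises by applying the following general technique twice: if a finite abelian group $M$ admits a filtration of length at most $\log_2(\#M)$ whose successive quotients are cyclic, and at each step of Valloni's algorithm the passage between successive terms multiplies the bound by at most $3$, then iterating yields $3^{\log_2(\#M)} = (\#M)^{\log_2 3}$. Two such applications, one for the Brauer group descent and one for a related comparison between the $\overline{k}$- and $k$-rational structures, would combine to produce the $2\log_2 3$ exponent. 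The technical heart of the argument will be to locate these two steps precisely in Valloni's algorithm and to verify that the base quantity driving each is indeed bounded by $[k:\Q]M_E$; I expect this bookkeeping, rather than any single deep input, to be where the bulk of the work lies.
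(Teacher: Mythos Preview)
Your proposal misidentifies the origin of every factor in the bound, so the outlined approach would not produce a proof. In the paper, there is no $\F_3$-vector-space argument and no filtration with successive quotients multiplying by~$3$. The actual structure is as follows. From Valloni one has $\#(\Br(X)/\Br_1(X)) \leq \Nm(I)$ for some conjugation-invariant ideal $I$ of $\co_E$ satisfying a ``permissibility'' inequality (Corollary~\ref{cor-brauer-bounded-by-Nm-I}); the constant $M_E$ is exactly what one gets by making that inequality explicit, so that $\phi_E(I)/\phi_F(J) \leq [k:\Q]M_E$ where $J = I\cap\co_F$ (Corollary~\ref{cor-explicit-totient-bound}). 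One then compares $\Nm(I)$ to $(\phi_E(I)/\phi_F(J))^2$ prime-by-prime: at inert primes the comparison is exact, at ramified primes it costs a factor of $\Nm(\rad(d_{E/F}))$, and at split primes it costs $\prod_{P\mid I,\ \mathrm{split}} \Nm(P)/(\Nm(P)-1)$. Both the factor $3^{[E:\Q]}$ and the exponent $2\log_2 3$ come from bounding this last product by the crude observation that $\Nm(P)/(\Nm(P)-1) \leq 3/2$ once $\Nm(P)\geq 3$, while at most $[E:\Q]$ primes have $\Nm(P)=2$; combining yields $3^{[E:\Q]}\cdot([k:\Q]M_E)^{2\log_2(3/2)}$, and multiplying by the remaining $([k:\Q]M_E)^2$ gives the exponent $2+2\log_2(3/2)=2\log_2 3$.

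In short, the numbers $3$ and $\log_2 3$ are artefacts of an elementary estimate on $\prod \Nm(P)/(\Nm(P)-1)$ (equivalently, the choice $\delta=1/2$ in Corollary~\ref{cor-to-bound-with-Psi-Phi-inverse}), not structural features of the Brauer group or of any lattice over $\co_E$. Your plan to locate ``two filtration steps'' in Valloni's algorithm that each contribute a $\log_2 3$ exponent would not succeed, because no such steps exist; the entire argument after Corollary~\ref{cor-explicit-totient-bound} is pure analytic number theory on the ideal $I$, with no further input from the geometry of $X$.
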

In fact, we can do much better than Theorem~\ref{thm-explicit-bound-without-phi-and-psi}, at the expense of a more complicated statement. In order to state our sharper bound, we define functions $\Phi : \Z_{\geq 0} \to \R_{\geq 0}$ and $\Psi : \Z_{\geq -1} \to \R_{\geq 0}$ as follows. For each nonnegative integer $i$, write $p_i$ for the $i^\mathrm{th}$ prime, so that e.g. $p_0 = 2$ and $p_1 = 3$. For each integer $n\geq 0$, define  
$$
\Phi(n) = \sum_{i = 0}^{n} \log(p_i - 1) 
$$
and 
$$
\Psi(n) = \prod_{i=0}^{n+1} \frac{p_i}{p_i - 1}. 
$$
Since $\Phi(n)$ is monotonically increasing and $\Phi(0) = 0$, we may define a ``pseudoinverse'' function $\Phi^{-1} : \R_{\geq 0} \to \Z_{\geq 0}$ by 
$$
\Phi^{-1}(t) = \max\{n \in \Z_{\geq 0} : \Phi(n) \leq t\}.
$$
The notation $\Phi^{-1}$ comes from the fact that $\Phi^{-1} \circ \Phi = \id_{\Z_{\geq 0}}$, whilst composing the other way around does not give the identity. Our sharpest bound on $\Br(X)/\Br_1(X)$ is as follows:

\begin{theorem}
    \label{thm-bound-with-Phi-and-Psi}
    Let $k$ be a number field and let $E$ be a CM field. Then every K3 surface $X/k$ with CM by $\co_E$ has 
    $$
    \#\big(\Br(X)/\Br_1(X)\big) \leq [k : \Q]^2 \cdot M_E^2 \cdot \Psi\Big(\Phi^{-1}\Big(
        \frac{2\log([k:\Q]M_E)}{[E:\Q]}
    \Big)\Big)^{[E:\Q]} \cdot \Nm(\rad(d_{E/F})).
    $$
\end{theorem}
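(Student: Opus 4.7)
The plan is to bound $\#(\Br(X)/\Br_1(X))$ one rational prime at a time, then combine the per-prime bounds via the elementary functions $\Phi$ and $\Psi$. Following Valloni~\cite{valloni-algorithm}, I would write $\Br(X)/\Br_1(X) = \bigoplus_\ell T_\ell$ as a direct sum of $\ell$-primary components, each described in terms of $\ell$-adic arithmetic invariants of $E$. The problem then becomes: control $\prod_\ell \#T_\ell$.

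For each prime $\ell$, I would establish a local upper bound whose shape, modulo ``base'' contributions that get absorbed into $[k:\Q]^2 M_E^2 \cdot \Nm(\rad(d_{E/F}))$, is at most $(\ell/(\ell-1))^{[E:\Q]}$. Two features do the work: the ramified primes of $E/F$ contribute a uniform factor collected globally as $\Nm(\rad(d_{E/F}))$, while the remaining primes contribute a Euler-type factor whose product over the finitely many contributing primes is what $\Psi$ measures.

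The crucial step is a global constraint cutting down the set $S$ of primes $\ell$ at which $T_\ell$ contributes nontrivially: I would prove an inequality of the form
\[
\prod_{\ell \in S} (\ell - 1) \;\leq\; ([k:\Q]\, M_E)^{2/[E:\Q]},
\]
which, after taking logarithms and using the monotonicity of $\Phi$, forces $S \subseteq \{p_0, p_1, \ldots, p_n\}$ with $n \leq \Phi^{-1}\bigl(2\log([k:\Q]M_E)/[E:\Q]\bigr)$. The product $\prod_{i=0}^{n+1} p_i/(p_i-1) = \Psi(n)$ then dominates the per-prime Euler factors, and raising to the $[E:\Q]$ power yields the claimed $\Psi(\Phi^{-1}(\cdots))^{[E:\Q]}$ term. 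Assembling the three pieces --- the base $[k:\Q]^2 M_E^2$, the ramification contribution $\Nm(\rad(d_{E/F}))$, and the Euler tail $\Psi(\Phi^{-1}(\cdots))^{[E:\Q]}$ --- gives the theorem.

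The main obstacle will be the derivation of the global constraint above. Extracting from Valloni's algorithm an inequality of the form ``if $T_\ell \neq 0$ then $\ell - 1$ divides (or otherwise bounds) a quantity controlled by $[k:\Q] M_E$'' is the arithmetic heart of the proof: it is where the class number ratio $h_F/h_E$, the unit index $[\co_E^\times : \co_F^\times]$, and the $2$-adic ramification data baked into $M_E$ must all enter on an equal footing. Once this is in hand, Theorem~\ref{thm-explicit-bound-without-phi-and-psi} should be deducible from Theorem~\ref{thm-bound-with-Phi-and-Psi} by the crude estimate $\Psi(n) \leq 2\prod_{i=1}^{n+1}(1 + 1/(p_i - 1))$ together with explicit bounds on $\Phi^{-1}$ in terms of its argument.
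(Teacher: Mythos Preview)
Your outline has the right architecture --- a global constraint limits the set of contributing primes, and an Euler-product tail is then controlled by $\Psi\circ\Phi^{-1}$ --- but the paper carries this out along a different axis, and the specific inequality you write down is not the one that falls out of Valloni's theory.

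The paper does not decompose $\Br(X)/\Br_1(X)$ into $\ell$-primary pieces $T_\ell$ over rational primes. Instead it invokes Valloni's K3 class field theory to produce a conjugation-invariant ideal $I\subseteq\co_E$ with $\#(\Br(X)/\Br_1(X))\le\Nm(I)$ and with the totient ratio bound $\phi_E(I)/\phi_F(J)\le [k:\Q]\,M_E$ (this is where $h_F/h_E$, the unit index, and the $2$-adic ramification data enter, packaged as $M_E$). One then computes $\phi_E(I_\p)/\phi_F(J_\p)$ prime-by-prime over primes $\p$ of $\co_F$: split primes contribute $\Nm(P)^{a_P-1}(\Nm(P)-1)$, inert primes contribute $\ge\sqrt{\Nm(P)}+1$, and ramified primes contribute $\Nm(P)^{\lfloor a_P/2\rfloor}$. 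This yields both
\[
\Nm(I)\le\Bigl(\tfrac{\phi_E(I)}{\phi_F(J)}\Bigr)^{2}\cdot\prod_{\substack{P\mid I\\ \text{split}}}\tfrac{\Nm(P)}{\Nm(P)-1}\cdot\Nm(\rad(d_{E/F}))
\quad\text{and}\quad
\prod_{\substack{P\mid I\\ \text{split}}}(\Nm(P)-1)\le\Bigl(\tfrac{\phi_E(I)}{\phi_F(J)}\Bigr)^{2}.
\]
So the global constraint is on \emph{prime ideals} $P$ of $\co_E$, with bound $([k:\Q]M_E)^2$, not on rational primes with bound $([k:\Q]M_E)^{2/[E:\Q]}$. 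The exponent $[E:\Q]$ enters only at the last step, via the pigeonhole that each rational prime has at most $[E:\Q]$ primes of $\co_E$ above it; this is what forces $\Nm(P_i)\ge p_{\lfloor (i-1)/[E:\Q]\rfloor}$ and produces the $\Psi(\Phi^{-1}(\cdot))^{[E:\Q]}$ shape.

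Your constraint $\prod_{\ell\in S}(\ell-1)\le([k:\Q]M_E)^{2/[E:\Q]}$ would be a strictly stronger statement and does not obviously follow from the totient-ratio bound: a rational prime $\ell$ can lie under a split $P$ with $\Nm(P)=\ell^f$ for $f>1$, and nothing forces the ``worst'' configuration in which $[E:\Q]$ degree-one primes over each $\ell$ all divide $I$. If you try to push your rational-prime plan through Valloni's framework, you will find yourself rediscovering the prime-ideal formulation anyway. The obstacle you flag is real, but its resolution is precisely to work with the ideal $I$ and its totient ratio rather than with the $\ell$-primary decomposition of the Brauer group.
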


The functions $\Phi$ and $\Psi$ are defined explicitly so, given the fields $k$ and $E$, we may compute the quantity 
$$
\Psi\Big(\Phi^{-1}\Big(
        \frac{2\log([k:\Q]M_E)}{[E:\Q]}
    \Big)\Big),
$$
making the bound of Theorem~\ref{thm-bound-with-Phi-and-Psi} into an explicit number, as in the following example:
\begin{example} 
    Taking $k = \Q$ and $E = \Q(i)$, we obtain 
    $
    M_E = 16,
    $
    and hence 
    $$
    \Psi\Big(\Phi^{-1}\Big(
        \frac{2\log([k:\Q]M_E)}{[E:\Q]}
    \Big)\Big) = \Psi\Big(\Phi^{-1}\Big(\log(16)\Big)\Big) = \frac{35}{8}.
    $$ 
    Theorem~\ref{thm-bound-with-Phi-and-Psi} then gives 
    $$
    \#\big(\Br(X)/\Br_1(X)\big) \leq 9800.
    $$
    Using his algorithm in \cite[Section~11.1]{valloni-algorithm}, Valloni obtains a bound of $100$ for the same quantity. Valloni's bound is better because working explicitly in $\Q(i)$ allows greater precision. Our proofs account for possible cases that Valloni rules out explicitly. Thus, there is still significant value in performing Valloni's computations in full detail. However, our bound is explicit and much easier to compute, so it adds value, particularly when computational efficiency is important. 
\end{example}

The bound in Theorem~\ref{thm-bound-with-Phi-and-Psi} looks a bit opaque because of the functions $\Phi^{-1}$ and $\Psi$, whose behaviours are not immediately clear. Theorem~\ref{thm-explicit-bound-without-phi-and-psi} is obtained using a very crude bound on the asymptotics of $\Psi \circ \Phi^{-1}$. The crude bounding method in question can be refined arbitrarily far, to obtain the following result:
\begin{theorem}
    \label{thm-bounding-Psi-Phi-inverse}
    Let $\delta >0$ be any positive real number, and let $L_\delta$ be an integer with $L_\delta \geq - 1$ and $\frac{p_{L_\delta + 2}}{p_{L_\delta + 2} - 1} \leq 1 + \delta$. Then we have 
    
    $$
    \Psi(\Phi^{-1}(t)) \leq \Psi(L_\delta)\cdot  (1 + \delta)^{1 + \frac{t}{\log 2}},
    $$
    for all $t \geq 0$. 
\end{theorem}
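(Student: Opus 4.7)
The plan is to split the product defining $\Psi(n)$ at $L_\delta$, obtaining a fixed prefix that contributes exactly $\Psi(L_\delta)$ and a tail whose individual factors are each at most $1+\delta$ by hypothesis. The number of tail factors will then be controlled by the very crude lower bound $\Phi(n) \geq n\log 2$, which follows from $\log(p_0-1) = 0$ and $\log(p_i-1) \geq \log 2$ for all $i \geq 1$.

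Concretely, I would set $n = \Phi^{-1}(t)$ and dispose of the easy case $n \leq L_\delta$ first: since every factor $\frac{p_i}{p_i-1}$ appearing in $\Psi$ exceeds $1$, the sequence $\Psi$ is monotonically increasing, so
$$
\Psi(n) \leq \Psi(L_\delta) \leq \Psi(L_\delta)(1+\delta)^{1+t/\log 2}.
$$
In the main case $n > L_\delta$, I would write
$$
\Psi(n) = \Psi(L_\delta) \cdot \prod_{i=L_\delta+2}^{n+1} \frac{p_i}{p_i - 1}.
$$
Since $p \mapsto p/(p-1)$ is decreasing and $p_i \geq p_{L_\delta+2}$ for $i \geq L_\delta+2$, each of the $n - L_\delta$ tail factors is at most $\frac{p_{L_\delta+2}}{p_{L_\delta+2}-1} \leq 1+\delta$ by the hypothesis on $L_\delta$. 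This gives $\Psi(n) \leq \Psi(L_\delta)(1+\delta)^{n - L_\delta}$.

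It remains to bound the exponent $n - L_\delta$. From $\Phi(n) \leq t$ and $\Phi(n) \geq n \log 2$ we get $n \leq t/\log 2$, and combining with $L_\delta \geq -1$ yields $n - L_\delta \leq 1 + t/\log 2$. Substituting into the inequality from the previous step produces the claimed bound.

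The main obstacle here is really just bookkeeping: the definition of $\Psi$ runs the product up to $i = n+1$ rather than $i = n$, so one must be careful to match the split at $L_\delta$ against the correct index range and to check the boundary case $n \leq L_\delta$ separately. Beyond this indexing subtlety, the argument rests only on the monotonicity of $p \mapsto p/(p-1)$ and the trivial lower bound $\log(p_i - 1) \geq \log 2$ for $i \geq 1$.
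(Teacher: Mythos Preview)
Your proof is correct and follows essentially the same argument as the paper: set $n=\Phi^{-1}(t)$, handle $n\leq L_\delta$ by monotonicity of $\Psi$, and for $n>L_\delta$ split the product at $L_\delta$, bound each of the $n-L_\delta$ tail factors by $1+\delta$, and use $\Phi(n)\geq n\log 2$ together with $L_\delta\geq -1$ to get $n-L_\delta\leq 1+t/\log 2$. The only cosmetic difference is that the paper records the intermediate inequality $n-L_\delta\leq n+1$ explicitly before invoking $n\leq t/\log 2$.
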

We deduce the following asymptotic bound on the transcendental Brauer group:
\begin{corollary}
    \label{cor-to-bound-with-Psi-Phi-inverse}
    Let $\delta >0$ and let $L_{\delta}$ be as in Theorem~\ref{thm-bounding-Psi-Phi-inverse}. Then we have
    $$
    \#\big(\Br(X)/\Br_1(X)\big) \leq \big((1+\delta)\Psi(L_\delta)\big)^{[E: \Q]}\cdot \Nm(\rad(d_{E/F})) \cdot ([k : \Q]M_E)^{2(1 + \log_2(1+\delta))}.
    $$
\end{corollary}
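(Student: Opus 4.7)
The plan is to derive Corollary~\ref{cor-to-bound-with-Psi-Phi-inverse} as a direct consequence of Theorem~\ref{thm-bound-with-Phi-and-Psi} and Theorem~\ref{thm-bounding-Psi-Phi-inverse}, via a short algebraic manipulation. The strategy is to substitute the bound on $\Psi\circ\Phi^{-1}$ provided by Theorem~\ref{thm-bounding-Psi-Phi-inverse} into the bound supplied by Theorem~\ref{thm-bound-with-Phi-and-Psi}, and then tidy up the resulting exponents so that the final dependence on $[k:\Q]M_E$ appears as a single power.

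Concretely, I would first set $t = \frac{2\log([k:\Q]M_E)}{[E:\Q]}$ so that Theorem~\ref{thm-bound-with-Phi-and-Psi} reads
$$
\#\big(\Br(X)/\Br_1(X)\big) \leq [k:\Q]^2 M_E^2 \cdot \Psi(\Phi^{-1}(t))^{[E:\Q]} \cdot \Nm(\rad(d_{E/F})).
$$
Applying Theorem~\ref{thm-bounding-Psi-Phi-inverse} yields
$$
\Psi(\Phi^{-1}(t))^{[E:\Q]} \leq \Psi(L_\delta)^{[E:\Q]} \cdot (1+\delta)^{[E:\Q]\left(1 + \frac{t}{\log 2}\right)}.
$$

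Next, I would simplify the exponent of $(1+\delta)$. The $[E:\Q]$ factor cancels against the $[E:\Q]$ in the denominator of $t$, giving
$$
[E:\Q]\cdot\frac{t}{\log 2} = \frac{2\log([k:\Q]M_E)}{\log 2} = 2\log_2([k:\Q]M_E),
$$
and hence $(1+\delta)^{2\log_2([k:\Q]M_E)} = ([k:\Q]M_E)^{2\log_2(1+\delta)}$. Multiplying this by the leading $[k:\Q]^2 M_E^2 = ([k:\Q]M_E)^2$ collapses the dependence on $[k:\Q]M_E$ into the single factor $([k:\Q]M_E)^{2(1 + \log_2(1+\delta))}$, while the remaining $(1+\delta)^{[E:\Q]} \cdot \Psi(L_\delta)^{[E:\Q]}$ combines as $((1+\delta)\Psi(L_\delta))^{[E:\Q]}$. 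Assembling the pieces gives exactly the stated bound.

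There is no real obstacle here: both input theorems are assumed, and the proof is just bookkeeping of exponents. The only ``tricky'' point worth writing out cleanly is the conversion $(1+\delta)^{c\log_2 x} = x^{c\log_2(1+\delta)}$, together with the observation that $t$ and $[E:\Q]$ interact precisely to cancel the $[E:\Q]$ in the exponent supplied by Theorem~\ref{thm-bounding-Psi-Phi-inverse}. I would present the argument as a two- or three-line computation following the substitutions above.
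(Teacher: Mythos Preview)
Your proposal is correct and follows exactly the approach the paper takes: the paper's proof simply states that the corollary ``is immediate from Theorems~\ref{thm-bound-with-Phi-and-Psi} and \ref{thm-bounding-Psi-Phi-inverse},'' and your write-up is precisely the bookkeeping that makes this immediacy explicit. The exponent manipulation you describe, including the identity $(1+\delta)^{c\log_2 x} = x^{c\log_2(1+\delta)}$ and the cancellation of $[E:\Q]$ against the denominator of $t$, is accurate and yields the stated bound.
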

Given $\delta$, it is natural to ask how efficiently we may compute the smallest possible $L_\delta$, and hence the best possible constant $\Psi(L_\delta)$. The following result gives us the answer:
\begin{theorem}
    \label{thm-time-complexity-C-eps-delta}
    Let $\delta > 0$, and let $L_\delta$ be the smallest integer with $L_\delta \geq -1$ and $\frac{p_{L_\delta + 2}}{p_{L_\delta + 2} - 1} \leq 1 + \delta$. The value of $L_\delta$, and the corresponding constant $\Psi(L_\delta)$, can be computed with time complexity 
    $$
    o\Big(\frac{1}{\delta}\Big),
    $$
    as $\delta \to 0$. 
\end{theorem}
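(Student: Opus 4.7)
The plan is to reduce the problem to enumerating all primes up to $\Theta(1/\delta)$ and then performing the product defining $\Psi(L_\delta)$. Observing that $\frac{p}{p-1} \leq 1 + \delta$ is equivalent to $p \geq 1 + 1/\delta$, we see that $L_\delta + 2$ is exactly the index of the smallest prime that is at least $1 + 1/\delta$. By Bertrand's postulate this prime is at most $2\lceil 1 + 1/\delta\rceil$, and by the prime number theorem $L_\delta + 2 = O\bigl((1/\delta)/\log(1/\delta)\bigr)$. So it suffices to enumerate all primes up to $N := 2\lceil 1 + 1/\delta\rceil = O(1/\delta)$.

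For this enumeration I would invoke a sublinear prime sieve; a standard example is Pritchard's wheel sieve, which lists all primes up to $N$ in time $O(N/\log\log N)$. Once the primes are in hand, one pass over the list locates $p_{L_\delta + 2}$ and hence determines $L_\delta$, and a further $O(L_\delta) = O\bigl((1/\delta)/\log(1/\delta)\bigr)$ arithmetic operations compute $\Psi(L_\delta) = \prod_{i=0}^{L_\delta + 1} p_i/(p_i - 1)$ by maintaining a running numerator--denominator pair. The sieve dominates, giving a total running time
$$
O\Bigl(\frac{1/\delta}{\log\log(1/\delta)}\Bigr),
$$
which is $o(1/\delta)$ because $\log\log(1/\delta) \to \infty$ as $\delta \to 0$.

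The main obstacle is the sieving step. The naive sieve of Eratosthenes already runs in $\Theta(N \log\log N) = \omega(1/\delta)$ time and so fails to meet the bound, which is why one genuinely has to appeal to a sieve of complexity $o(N)$; beyond this, everything is bookkeeping. The time complexity should be interpreted in the unit-cost arithmetic model, so each multiplication by the next factor $p_i/(p_i-1)$ is counted as $O(1)$ and the output $\Psi(L_\delta)$ is represented as an unreduced numerator--denominator pair updated once per prime; this is the only sensible model here, since the numerator alone, being a primorial near $1/\delta$, has $\Theta(1/\delta)$ bits and hence cannot be written out in $o(1/\delta)$ bit operations.
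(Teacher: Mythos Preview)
Your argument is correct and follows the same overall outline as the paper's: rewrite the condition as $p_{L_\delta+2}\geq 1+1/\delta$, sieve all primes up to an explicit bound of order $1/\delta$, locate the threshold index, and then multiply out $\Psi(L_\delta)$ in $O(L_\delta)=O\bigl((1/\delta)/\log(1/\delta)\bigr)$ further steps.

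The substantive difference is the sieve. The paper invokes the ordinary sieve of Eratosthenes and asserts that the first $L_0$ primes (where $L_0\log L_0\sim 1/\delta$) can be listed in time $O(L_0\log\log L_0)$. As you explicitly note, this is not what Eratosthenes gives: to list $p_i$ for $i\leq L_0$ one must sieve up to $T\sim p_{L_0}\sim L_0\log L_0\sim 1/\delta$, costing $\Theta(T\log\log T)=\Theta\bigl((1/\delta)\log\log(1/\delta)\bigr)$, which is $\omega(1/\delta)$ rather than $o(1/\delta)$. So the paper's argument as written does not actually achieve the stated bound, and your observation that ``one genuinely has to appeal to a sieve of complexity $o(N)$'' is the correct diagnosis. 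Your use of Pritchard's wheel sieve, with complexity $O(N/\log\log N)$, repairs this and delivers $O\bigl((1/\delta)/\log\log(1/\delta)\bigr)=o(1/\delta)$ honestly. Your remark about the unit-cost arithmetic model, and the need to carry $\Psi(L_\delta)$ as an unreduced numerator--denominator pair because the primorial numerator alone has $\Theta(1/\delta)$ bits, is also a point the paper leaves implicit; it is a necessary caveat for the claim to make sense.
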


\begin{example}
    [Imaginary quadratic fields]
    We will explore the implications of our bounds for K3 surfaces with CM by rings of integers of imaginary quadratic fields. Let $E = \Q(\sqrt{d})$ for a squarefree negative integer $d$. The cases $d = -1, -2, -3$ are easy but slightly irregular, so for simplicity we will assume that $d \leq -5$. Then we have 
    $$
    M_E = \begin{cases}
        \frac{1}{h_E}\cdot 2^{\omega(d)}\quad\quad\hspace{0.16em}\text{if $d\equiv 1\pmod{4}$},
        \\
        \frac{1}{h_E}\cdot 2^{\omega(d) + \frac{5}{2}}\quad\text{if $d \equiv 2\pmod{4}$},
        \\
        \frac{1}{h_E}\cdot 2^{\omega(d) + 3}\quad\hspace{0.16em}\text{if $d \equiv 3\pmod{4}$},
    \end{cases}
    $$
    so we always have
    $$
    M_E \leq \frac{1}{h_E} \cdot 2^{\omega(d)+ 3}.
    $$
    Let $\eta = \log_2(1+\delta)$. Then Corollary~\ref{cor-to-bound-with-Psi-Phi-inverse} tells us that 
    $$
    \#\big(\Br(X)/\Br_1(X)\big) \leq \Big(2\cdot(1+\delta)^2\cdot \Psi(L_\delta)^2\cdot 64^{1 + \eta}\Big) \cdot [k : \Q]^{2(1+\eta)}\cdot 4^{\omega(d)(1+\eta)} \cdot \lvert d \rvert. 
    $$
    It is easy to see\footnote{e.g. by showing that $\frac{\alpha^{\omega(d)}}{d}\to 0$ as $d\to\infty$ for all $\alpha > 0$, and then taking $\alpha = 4^{\frac{1+\eta}{\eta}}$.} that $4^{\omega(d)} = o\Big(\lvert d\rvert^{\frac{\eta}{1+\eta}}\Big)$ as $\lvert d\rvert\to\infty$, so we have 
    \begin{equation}
        \label{eqn-little-oh-bound}
    \#\big(\Br(X)/\Br_1(X)\big) = o\Big([k:\Q]^{2(1+\eta)}\cdot \lvert d\rvert^{1+\eta}\Big).
    \end{equation}
    Clearly $\eta \to 0$ as $\delta \to 0$, so Equation~(\ref{eqn-little-oh-bound}) holds for any $\eta > 0$. 
    So in other words, when $X$ has principal CM by an imaginary quadratic field, the bound on $\#(\Br(X)/\Br_1(X))$ grows very slightly faster than quadratically in the degree of the field of definition, and very slightly faster than linearly in the discriminant of the CM field. 
\end{example}

\section{Background}
In this section, we sketch a brief overview of the theory required to understand our results. For a proper introduction to K3 surfaces, we point the reader to \cite{Huybrechts_2016}. The theory of complex multiplication of K3 surfaces is studied in \cite{Zarhin1983}, and the details specific to our situation are developed in \cite{valloni-algorithm}. We also recommend Davide Lombardo's PhD thesis \cite{lombdardo-thesis}, which contains clear and detailed exposition of Hodge structures and Mumford--Tate groups.
\subsection{Notation for number fields}
\label{subsec-nf-notation}

We collect some standard notation, which is used throughout the paper. Given a number field $M$, write $\co_M$ for its ring of integers and $h_M$ for its class number. For an ideal $I \subseteq \co_M$, write $\omega(I)$ for the number of distinct prime ideals of $\co_M$ dividing $I$, and $\Nm(I)$ for its norm $\# (\co_M/I)$. Moreover, write $I_2$ for the $2$-part of $I$, which is defined to be 
$$
I_2 = \prod_{\p \mid 2} \p^{v_\p(I)},
$$
where the product is over prime ideals of $\co_M$ containing $2$. Write $\phi_M$ for the Euler totient function on ideals of $\co_M$, which is defined by 
$$
\phi_M(I) = \#(\co_M/I)^\times  
$$
for each ideal $I$ of $\co_M$. 
Given a CM field (i.e. a totally imaginary quadratic extension of a totally real number field) $E$, we will always write $F$ for its maximal totally real subfield, leaving $E$ implicit in the notation. Moreover, we write $d_{E/F}$ for the discriminant of the extension, which is an ideal of $\co_F$. Given a CM field $E$ and an element $x \in E$, we write $\overline{x}$ for the conjugate of $x$ over $F$. Similarly, for an ideal $I$ of $\co_E$, we write $\overline{I}$ for the conjugate ideal $\{\overline{x} : x \in I\}$. Given an implicit choice of CM field $E$, write $G$ for the Galois group $\Gal(E/F)$. 

Again when $E$ is a CM field, for each prime ideal $\p$ of $\co_F$, write $e(\p)$ for the ramification index of $\p$ in $E$. For an archimedean place $v$ of $F$, define $e(v)$ to be $2$ if $v$ can be extended to a place $w$ of $E$ with $[E_w : F_v] = 2$ (so that the extension is $\C/\R$), and set $e(v) = 1$ otherwise. When $\p$ is a prime ideal of $\co_F$ lying over $2$, write $e(\p\mid 2)$ for the ramification index of $\p$ over the rational prime $2$.

\subsection{Hodge structures}
\begin{definition}
    Let $R$ be one of $\Z$ and $\Q$, and let $V$ be a free $R$-module of finite rank. Write $V_\C$ for the base change $V\otimes_R \C$. A \emph{Hodge structure on $V$ of weight $n$} is a direct sum decomposition 
    $$
    V_\C = V^{n,0}\oplus V^{n-1, 1}\oplus \ldots \oplus V^{0,n}
    $$
    of vector spaces, such that the complex conjugation map $\iota:V_\C \to V_\C$ has $\iota(V^{p,q}) = V^{q,p}$ for each $(p,q)$. Let $V$ and $W$ be free $R$-modules with Hodge structures of weight $n$. A \emph{morphism of Hodge structures} $V \to W$ is an $R$-module homomorphism $\varphi : V \to W$ such that the natural base change $\varphi_\C : V_\C \to W_\C$ has 
    $$
    \varphi_\C(V^{p,q}) \subseteq W^{p,q}
    $$
    for each $(p,q)$. 
\end{definition}
The best-known example of Hodge structures comes from complex algebraic geometry. The following result can be found in \cite[Page~38]{Huybrechts_2016}:
\begin{theorem}
    \label{thm-hodge-structure-on-complex-variety}
    Let $X$ be a smooth projective variety defined over $\C$, and write $\Omega_X$ for its cotangent sheaf. For each positive integer $r$, there is a canonical Hodge structure of weight $r$ on $H^r(X,\Z)$ (respectively $H^r(X,\Q)$), given by 
    $$
    H^r(X,\C) = \bigoplus_{p+q=r} H^{p,q},
    $$
    where $H^{p,q}$ is the space 
    $$
    H^{p,q} = H^q(X, \Omega_X^p).
    $$
\end{theorem}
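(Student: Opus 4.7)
The plan is to invoke the classical machinery of Hodge theory on compact Kähler manifolds, which specialises to smooth complex projective varieties. First I would observe that since $X$ is smooth and projective, a choice of projective embedding $X \hookrightarrow \bP^N_\C$ equips $X(\C)$ with the restriction of the Fubini--Study metric, making it a compact Kähler manifold. This reduces the problem to proving the theorem for compact Kähler manifolds, at which point it becomes a purely analytic statement.

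Next I would develop the necessary Hodge theory. Fix the Kähler metric and the induced Hermitian metric on the bundle of complex-valued $k$-forms. Let $d$, $\partial$, $\bar\partial$ be the usual operators, with formal adjoints $d^*$, $\partial^*$, $\bar\partial^*$, and let $\Delta_d = dd^* + d^*d$ (and similarly $\Delta_\partial, \Delta_{\bar\partial}$) be the corresponding Laplacians. The core analytic input is the Hodge theorem for compact Riemannian manifolds: each class in $H^r(X,\C)$ has a unique $\Delta_d$-harmonic representative, yielding an isomorphism $H^r(X,\C) \cong \mathcal{H}^r_d(X)$. The key algebraic input is the set of Kähler identities, which imply the fundamental equality
$$
\Delta_d = 2\Delta_{\bar\partial} = 2\Delta_\partial.
$$
Because $\Delta_{\bar\partial}$ preserves the bidegree decomposition of forms, so does $\Delta_d$; hence a form is $d$-harmonic if and only if each of its $(p,q)$-components is $\bar\partial$-harmonic. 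This gives a direct sum decomposition $\mathcal{H}^r_d(X) = \bigoplus_{p+q=r}\mathcal{H}^{p,q}_{\bar\partial}(X)$.

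I would then identify the spaces $\mathcal{H}^{p,q}_{\bar\partial}(X)$ with sheaf cohomology. By the Dolbeault theorem, $H^q(X,\Omega_X^p) \cong H^{p,q}_{\bar\partial}(X)$, and the Hodge theorem applied to $\bar\partial$ in each bidegree produces a canonical isomorphism $H^{p,q}_{\bar\partial}(X) \cong \mathcal{H}^{p,q}_{\bar\partial}(X)$. Combining these with the decomposition above yields the desired
$$
H^r(X,\C) = \bigoplus_{p+q=r} H^q(X,\Omega_X^p).
$$
Finally, the compatibility with complex conjugation $\iota(V^{p,q}) = V^{q,p}$ follows because complex conjugation of forms swaps holomorphic and antiholomorphic indices and commutes with $\Delta_d$ (the metric is real), so it sends $\mathcal{H}^{p,q}_{\bar\partial}$ isomorphically onto $\mathcal{H}^{q,p}_{\bar\partial}$. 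For the integral structure on $H^r(X,\Z)$, one uses that the natural map $H^r(X,\Z)\otimes_\Z \C \to H^r(X,\C)$ is an isomorphism (universal coefficients, using that $H^*(X,\Z)$ is finitely generated) and that the decomposition above is defined intrinsically, independent of the Kähler metric.

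The main obstacle is the analytic heart of the argument: establishing the Kähler identities and the Hodge theorem (existence and uniqueness of harmonic representatives, together with the Hodge orthogonal decomposition of forms). These are nontrivial results in PDE and elliptic regularity, and in a careful write-up one would either cite them from a standard reference such as Griffiths--Harris or Huybrechts, or devote substantial work to their derivation; the subsequent formal manipulations are then routine.
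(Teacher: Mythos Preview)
Your sketch is the standard proof via K\"ahler geometry and is correct in outline. The paper, however, does not prove this statement at all: it is quoted as background and simply referenced to \cite[Page~38]{Huybrechts_2016}. What you have written is essentially the argument one would find expanded in that reference (compact K\"ahler $\Rightarrow$ K\"ahler identities $\Rightarrow$ $\Delta_d = 2\Delta_{\bar\partial}$ $\Rightarrow$ harmonic forms decompose by bidegree $\Rightarrow$ Dolbeault identification), so there is no discrepancy in approach, only in level of detail: the paper defers entirely to the literature, while you reproduce the skeleton of the classical proof.
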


There is a particular real algebraic group, called the \emph{Deligne torus} and denoted by $\mathbb{S}$, such that Hodge structures are equivalent to representations of $\mathbb{S}$. In other words, for any rational vector space $V$, there is a natural bijection 
$$
\{\text{Hodge structures on $V$}\} \longleftrightarrow \Hom(\mathbb{S}, \GL(V_\R)),
$$
where the right-hand side denotes morphisms of real algebraic groups. Given a rational Hodge structure $V$, we thus obtain a homomorphism 
$$
\rho : \mathbb{S} \to \GL(V_\R) 
$$
of real algebraic groups. Taking real points, we get a group homomorphism 
$$
\rho_\R : \mathbb{S}(\R) \to \GL(V)(\R). 
$$ 
We define the \emph{Mumford--Tate group} $\MT(V)$ of $V$ to be the the smallest algebraic subgroup of $\GL(V)$ (defined over $\Q$) such that 
$$
\im(\rho_\R) \subseteq \MT(V)(\R). 
$$
\subsection{K3 surfaces and complex multiplication}

Let $k$ be a field and let $X$ be a variety over $k$. Write $\omega_X$ for the canonical sheaf of $X$ and $\co_X$ for the sheaf of regular functions on $X$.
\begin{definition}
    Let $k$ be a field. A \emph{K3 surface over $k$} is a complete nonsingular surface $X$ over $k$, such that $\omega_X \cong \co_X$ and $H^1(X,\co_X) = 0$. 
\end{definition}

Let $X$ be a K3 surface over $\C$. Then there is a short exact sequence of sheaves, called the \emph{exponential exact sequence}, given by 
$$
0 \to \underline{\Z} \to \co_X \overset{\exp}{\to} \co_X^{\times} \to 1. 
$$
The associated long exact sequence of cohomology gives an exact sequence 
\begin{equation}
    \label{eqn-exact-sequence-k3-surface}
0 \to \Pic(X) \to H^2(X,\Z) \to \C \to H^2(X,\co_X^\times) \to H^3(X,\Z) \to 0. 
\end{equation}
Since $\Pic(X)$ and $\C$ are torsion-free, it follows that $H^2(X,\Z)$ is also torsion-free. From basic properties of integral cohomology, one can obtain $H^3(X,\Z) = H^1(X,\Z) = 0$. 

There is a natural \emph{intersection pairing} 
$$
H^2(X,\Z) \times H^2(X,\Z) \to H^4(X,\Z) \cong \Z,
$$
which makes $H^2(X,\Z)$ into a \emph{lattice}, by which we mean a free abelian group equipped with a nondegenerate bilinear form. For the formal definition and properties of lattices, see \cite[Chapter~14]{Huybrechts_2016}. The following result, which is \cite[Proposition~3.5]{Huybrechts_2016}, says that all K3 surfaces give rise to the same lattice:

\begin{theorem}
    There is a distinguished lattice $\Lambda_{\mathrm{K3}}$, called the \emph{K3 lattice}, such that for any K3 surface $X$ over $\C$, there is a lattice isomorphism
    $$
    H^2(X,\Z) \cong \Lambda_{\mathrm{K3}}.
    $$
\end{theorem}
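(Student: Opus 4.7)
The plan is to identify the four classical invariants of the intersection lattice on $H^2(X,\Z)$ — its rank, its unimodularity, its signature, and its parity — and then to invoke Milnor's classification of indefinite even unimodular $\Z$-lattices, which is rigid enough to force the isomorphism class once these invariants are fixed. So the theorem will reduce to four local computations followed by one structural quotation.

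First I would compute the rank. Torsion-freeness of $H^2(X,\Z)$ is already in hand from the exponential sequence discussion in the excerpt, so the rank equals the second Betti number $b_2$. Combining Noether's formula $12\chi(\co_X) = K_X^2 + \chi_{\mathrm{top}}(X)$ with the K3 data $K_X = 0$ and $\chi(\co_X) = 1 - q + p_g = 1 - 0 + 1 = 2$ (which uses exactly $\omega_X \cong \co_X$ and $H^1(X,\co_X) = 0$) gives $\chi_{\mathrm{top}}(X) = 24$. Together with $b_0 = b_4 = 1$ and the equalities $b_1 = b_3 = 0$ already noted in the excerpt, this forces $b_2 = 22$.

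Next I would nail down the bilinear form. Unimodularity is immediate from Poincaré duality on the oriented closed $4$-manifold underlying $X$. For the signature $(3,19)$, the Hodge decomposition of Theorem~\ref{thm-hodge-structure-on-complex-variety} exhibits $H^{2,0} \oplus H^{0,2}$, of complex dimension $2p_g = 2$, as a conjugation-invariant and positive-definite real subspace of $H^2(X,\R)$ of real dimension $2$, and the Hodge index theorem applied to the $(1,1)$-part contributes a further $(1, b_2 - 3)$, for a total signature of $(3, 19)$. Evenness of the form I would read off from Wu's formula $x \cdot x \equiv w_2(X) \cdot x \pmod 2$ on a closed oriented $4$-manifold, after observing that $X$ is spin: on a complex manifold $w_2 = c_1 \bmod 2$, and $c_1(X) = -c_1(\omega_X) = 0$ since $\omega_X \cong \co_X$.

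To finish, I would invoke Milnor's classification: an indefinite even unimodular $\Z$-lattice is determined up to isomorphism by its signature. The lattice $\Lambda_{\mathrm{K3}} := U^{\oplus 3} \oplus E_8(-1)^{\oplus 2}$, where $U$ is the hyperbolic plane and $E_8$ is the usual positive-definite root lattice, has rank $22$, signature $(3,19)$, and is even and unimodular, so every K3 surface must satisfy $H^2(X,\Z) \cong \Lambda_{\mathrm{K3}}$. The main obstacle is really the assembly of the four invariants: each draws on a nontrivial input (Noether's formula, Poincaré duality, the Hodge index theorem, and Wu's formula) and has to be extracted from the minimal data $\omega_X \cong \co_X$ and $H^1(X,\co_X) = 0$. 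Once those four numbers are on the table, the classification theorem finishes the argument essentially for free.
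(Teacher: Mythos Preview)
Your argument is correct and is precisely the standard proof one finds in the literature (e.g.\ in Huybrechts' book). The paper, however, does not prove this statement at all: it simply quotes it as \cite[Proposition~3.5]{Huybrechts_2016} and moves on, since the result is background material rather than a contribution of the paper. So there is no approach to compare against; you have supplied the proof that the paper outsources.

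One small remark on presentation: your signature computation via the Hodge index theorem is fine, but you might alternatively (or additionally) mention the Hirzebruch signature theorem, which gives $\sigma = \tfrac{1}{3}(c_1^2 - 2c_2) = \tfrac{1}{3}(0 - 48) = -16$ directly, since you have already computed $c_2 = \chi_{\mathrm{top}}(X) = 24$. This avoids any subtlety about why the form is positive-definite on the real $2$-plane underlying $H^{2,0} \oplus H^{0,2}$.
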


Theorem~\ref{thm-hodge-structure-on-complex-variety} gives a natural Hodge structure 
$$
H^2(X,\C) = H^{0,2} \oplus H^{1,1} \oplus H^{2,0}
$$
on $H^2(X,\Z)$ and $H^2(X,\Q)$.
For any K3 surface $X$, we always have (see \cite[Page~8]{Huybrechts_2016})
$$
\dim H^{p,q} = \begin{cases}
    1\quad\text{if $(p,q) = (2,0)$ or $(0,2)$},
    \\
    20 \quad\text{if $(p,q) = (1,1)$},
    \\
    0\quad\text{otherwise}. 
\end{cases}
$$

Recall from Equation~(\ref{eqn-exact-sequence-k3-surface}) that there is a natural embedding 
$$
\Pic(X) \hookrightarrow H^2(X,\Z). 
$$
The image of this embedding is a sublattice of $H^2(X,\Z)$, which we call the \emph{N\'eron-Severi lattice} of $X$, denoted $\NS(X)$. We define the \emph{transcendental lattice} $T(X)$ of $X$ to be the orthogonal complement
$$
T(X) = \NS(X)^\perp,
$$
with respect to the intersection pairing on $H^2(X,\Z)$. The sublattice $T(X)\subseteq H^2(X,\Z)$ is well-behaved with respect to the Hodge structure on $H^2(X,\Z)$, so it inherits its own Hodge structure. The rational Hodge structure $T(X)_\Q$ has a well-defined ring $\End_{\Hdg}(T(X)_\Q)$ of Hodge endomorphisms. Denote this ring by $E(X)$. 
\begin{theorem}[Zarhin]
    \label{thm-abelian-MT-implies-CM-field}
    If the Mumford--Tate group $\MT(T(X)_\Q)$ is abelian, then $E(X)$ is isomorphic to a CM field, and the $E(X)$-vector space $T(X)_\Q$ has 
    $$
    \dim_{E(X)}T(X)_\Q = 1. 
    $$
\end{theorem}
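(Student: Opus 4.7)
The plan is to reduce the statement to two classical ingredients: the irreducibility of the rational Hodge structure $T(X)_\Q$, and an application of Schur's lemma combined with the polarization coming from the intersection pairing.

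First I would establish that $T(X)_\Q$ is an irreducible rational Hodge structure. The key input is the Lefschetz $(1,1)$ theorem, which identifies $\NS(X)_\Q$ with $H^{1,1}(X) \cap H^2(X,\Q)$, so that $T(X)_\Q \cap H^{1,1} = 0$. Suppose $V \subseteq T(X)_\Q$ is a non-zero sub-Hodge structure. Since $\dim H^{2,0} = 1$, either $V_\C$ contains $H^{2,0}$ or $V_\C \subseteq H^{1,1}$. In the latter case $V \subseteq T(X)_\Q \cap H^{1,1} = 0$, a contradiction. In the former case the orthogonal complement $V^\perp$ of $V$ inside $T(X)_\Q$ (with respect to the intersection pairing) is again a sub-Hodge structure, and now sits inside $H^{1,1}$, so $V^\perp = 0$ by the same argument. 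Hence $V = T(X)_\Q$ and the Hodge structure is irreducible.

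Next I would deduce that $E(X)$ is a field and that $\dim_{E(X)} T(X)_\Q = 1$. By the standard description of the Mumford--Tate group, $E(X) = \End_\Hdg(T(X)_\Q)$ coincides with the commutant of $\MT(T(X)_\Q)$ inside $\End_\Q(T(X)_\Q)$. Irreducibility of the Hodge structure means irreducibility of the MT-action, so Schur's lemma exhibits $E(X)$ as a finite-dimensional division algebra over $\Q$. Now use the assumption that $\MT$ is abelian: the $\Q$-subalgebra $L \subseteq \End_\Q(T(X)_\Q)$ generated by $\MT$ is then commutative and contained in $E(X)$. After base change to $\overline{\Q}$, the module $T(X)_{\overline{\Q}}$ decomposes as a direct sum of characters of $\MT$; irreducibility over $\Q$ forces these characters to form a single $\Gal(\overline{\Q}/\Q)$-orbit, which identifies $L$ with a number field and $T(X)_\Q$ with a free $L$-module of rank one. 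The containments $L \subseteq E(X) \subseteq \End_L(T(X)_\Q) = L$ then give $E(X) = L$, a commutative field, with $\dim_{E(X)} T(X)_\Q = 1$.

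It remains to show that $E(X)$ is CM. The intersection pairing restricts to a polarization of the weight-$2$ Hodge structure $T(X)_\Q$, and induces a Rosati involution $\sigma$ on $E(X)$. Positivity of the polarization makes $\sigma$ a positive involution, so its fixed subfield $F$ is totally real. To upgrade this to a CM structure I would use the Hodge decomposition: under the identification $T(X)_\Q \cong E(X)$, the representation $\rho: \mathbb{S} \to \GL(T(X)_\R)$ factors through $(E(X) \otimes_\Q \R)^\times$ and is encoded by a CM type; because $H^{2,0} \neq 0$ is exchanged with $H^{0,2}$ by complex conjugation, this representation is non-trivial on the imaginary part and pairs the archimedean places of $E(X)$ into complex conjugate pairs, forcing $E(X)$ to be totally imaginary with $[E(X):F] = 2$.

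The main obstacle is the final paragraph: translating the rather abstract condition ``$H^{2,0}$ is one-dimensional and conjugate to $H^{0,2}$'' into the algebraic statement that $\sigma$ has a two-dimensional fixed-point-free extension and that $E(X) \otimes_\Q \R$ is a product of copies of $\C$. I would expect to have to analyse carefully the compatibility between the Rosati involution, the natural complex conjugation on $E(X) \otimes_\Q \C$, and the characters arising in the Hodge decomposition — this is where the specific shape of K3-type Hodge numbers is essential, and where one must invoke Zarhin's original analysis rather than soft general principles.
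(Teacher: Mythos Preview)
The paper does not prove this theorem at all: its entire proof is the sentence ``This is stated in \cite[Page~12]{valloni-algorithm}.'' So there is no argument to compare against; what you have written is a genuine (and essentially correct) sketch of Zarhin's original proof, which the paper simply outsources.

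Your outline is sound, but one step in the third paragraph is not justified as written. From irreducibility over $\Q$ you correctly deduce that the distinct characters of the abelian group $\MT$ occurring in $T(X)_{\overline{\Q}}$ form a single Galois orbit, so the subalgebra $L$ they generate is a number field. However, the assertion that $T(X)_\Q$ is free of rank one over $L$, equivalently that each character occurs with multiplicity one, does not follow from irreducibility alone; a priori each isotypic component could have common dimension $m>1$, giving $\End_L(T(X)_\Q)\cong M_m(L)$ rather than $L$. The clean fix is to bypass $L$ and invoke the double centralizer theorem directly: since $T(X)_\Q$ is a simple $\MT$-module with commutant $E(X)$, the bicommutant of $\MT$ is $\End_{E(X)}(T(X)_\Q)\cong M_n(E(X)^{\mathrm{op}})$, where $n=\dim_{E(X)}T(X)_\Q$; commutativity of $\MT$ forces this algebra to be commutative, hence $n=1$ and $E(X)$ is a field.

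Your final paragraph is on the right track and less of an obstacle than you suggest. Once $\dim_{E(X)}T(X)_\Q=1$, the Mumford--Tate group sits inside $E(X)^\times$, so $\rho:\mathbb{S}\to\GL(T(X)_\R)$ factors through $(E(X)\otimes_\Q\R)^\times$. If $E(X)$ were totally real this target would be a product of copies of $\R^\times$, and every algebraic homomorphism $\C^\times\to(\R^\times)^n$ lands in real scalars; but $z\in\C^\times$ acts on the nonzero space $H^{2,0}$ by $z^2$, which is not real. This immediately excludes the totally real case, so the positive Rosati involution on the field $E(X)$ is nontrivial and $E(X)$ is CM.
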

\begin{proof}
    This is stated in \cite[Page~12]{valloni-algorithm}.
\end{proof}
\begin{definition}
In light of Theorem~\ref{thm-abelian-MT-implies-CM-field}, we say that a K3 surface $X$ over a number field has \emph{complex multiplication} if the Mumford--Tate group $\MT(T(X)_\Q)$ is abelian. In that case, we say that $X$ has complex multiplication \emph{by $E$}, where $E$ is any field isomorphic to $E(X)$. 
\end{definition}
Let $X$ be a K3 surface with complex multiplication. Then the ring $\End_{\Hdg}(T(X))$ is an order in $E(X)$, and we denote it by $\co(X)$. 
\begin{definition}
    Let $E$ be a CM field. A K3 surface $X$ has \emph{principal complex multiplication by $E$} if it is has CM by $E$ and $\co(X)$ is a maximal order in $E(X)$. Alternatively, we say that $X$ has complex multiplication \emph{by $\co_E$}.
\end{definition}

\subsection{K3 class groups}

Let $K$ be a number field and write $\mathbb{A}_{K,f}^\times$ for its finite ideles. Given an ideal $I$ of $K$, define
$$
U_{K,I} = \{s \in \A_{K,f}^\times : v(s) = 0 \text{ and } v(s-1) \geq v(I) \text{ for all finite places $v$}\}.
$$
The ray class group of $K$ modulo $I$ is isomorphic to the double quotient
$$
K^\times \backslash \A_{K,f}^\times / U_{K,I}. 
$$
The \emph{K3 class group} is a sort of ``CM-analogue'' of this situation. We basically replace the number field $K$ with a CM field $E$, and replace ``units of $K$'' with ``units of $E$ with norm $1$ over the totally real subfield''. Given a CM field $E$, define
$$
U(E) = \{e \in E^\times : e\bar{e} = 1\},\quad U(\A_{E,f}) = \{s \in \A_{E,f}^\times : s\bar{s} = 1\},
$$
and, for any ideal $I$ of $E$, 
$$
\widetilde{U}_{E,I} = \{s \in U(\A_{E,f}) : v(s) = 0 \text{ and } v(s-1) \geq v(I) \text{ for all finite places $v$ of $E$}\}. 
$$
Then the \emph{K3 class group of $E$ modulo $I$} is defined in \cite[Definition~9.5]{valloni-algorithm} to be the double quotient
$$
G_{\mathrm{K3}, I}(E) = U(E)\backslash U(\A_{E,f}) / \widetilde{U}_{E,I}.
$$
Valloni then defines the \emph{K3 class field of $E$ modulo $I$} to be the abelian extension $F_{K3,I}(E) / E$ associated to the surjection 
$$
\A_{E,f}^\times \to G_{\mathrm{K3}, I}(E),\quad s \mapsto \frac{s}{\overline{s}}. 
$$
\begin{theorem}
    [Valloni]
    \label{thm-existence-of-suitable-ideal}
    Let $K$ be a number field, let $E$ be a CM field with $E\subseteq K$, and let $X/K$ be a K3 surface with CM by $\co_E$. There is an ideal $I$ of $\co_E$ such that the following two statements are true:
    \begin{enumerate}
        \item $\Br(\overline{X})^{G_K} \cong \co_E/I$.
        \item $F_{\mathrm{K3},I}(E) \subseteq K$. 
    \end{enumerate}
\end{theorem}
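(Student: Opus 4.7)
The plan is to assemble this theorem from Valloni's framework for the Brauer group of a CM K3 surface. The first task is to describe $\Br(\overline{X})$ as an $\co_E$-module: using the exponential exact sequence together with the known fact that $\Br(\overline{X})$ is torsion and agrees with $T(X)\otimes \Q/\Z$ (or dually with $\Hom(T(X),\Q/\Z)$), and using that $T(X)_\Q$ is one-dimensional over $E$ by Theorem~\ref{thm-abelian-MT-implies-CM-field} with the full ring of integers acting, I would identify $\Br(\overline{X})$ with a cofinitely generated cofree $\co_E$-module isomorphic to $E/\co_E$ (possibly after fixing a generator of $T(X)$ over $\co_E$, which requires checking that principal CM is what gives this identification on the nose rather than just up to finite index).

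Next I would invoke the main-theorem-of-complex-multiplication style result (worked out for K3 surfaces by Rizov and reformulated in \cite{valloni-algorithm}) to describe the $G_K$-action on $\Br(\overline{X})$. Because $X$ has CM by $\co_E$, the action of $G_K$ on $T(X)\otimes\hat{\Z}$ factors through a continuous character $\chi : G_K \to \A_{E,f}^\times$, and the condition that the Galois action preserves the lattice pairing forces $\chi\cdot\overline{\chi}=1$, so $\chi$ in fact lands in $U(\A_{E,f})$. Under the identification of Step~1, the action on $E/\co_E$ is multiplication by $\chi$, or more precisely by $\chi/\overline{\chi}$ on the corresponding quotient, recovering exactly the surjection $s\mapsto s/\overline{s}$ used to define the K3 class field.

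With those two inputs in place the remainder is algebraic. Since $\chi$ is continuous and $U(\A_{E,f})$ acts on $E/\co_E$ through a profinite quotient, the image $\chi(G_K)\subseteq U(\A_{E,f})$ is an open subgroup, hence contains $\widetilde{U}_{E,I}$ for some ideal $I$ of $\co_E$, and we may take $I$ as small as possible with this property. For that $I$, the submodule of $E/\co_E$ fixed by $\chi(G_K)$ is precisely $I^{-1}/\co_E\cong \co_E/I$, proving~(1). For~(2), the minimality of $I$ means $\chi$ factors through the quotient $U(\A_{E,f})/\widetilde{U}_{E,I}$, which is $G_{\mathrm{K3},I}(E)$, i.e.\ $\chi$ factors through $\Gal(F_{\mathrm{K3},I}(E)/E)$; since $E\subseteq K$ and $\chi$ is trivial on $G_K$, this gives $F_{\mathrm{K3},I}(E)\subseteq K$.

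The main obstacle, and where I would expect to spend the most effort, is Step~2: making the idelic description of the Galois action on $\Br(\overline{X})$ precise enough that the quotient $s\mapsto s/\overline{s}$ of the definition of $F_{\mathrm{K3},I}(E)$ matches the action on $\Br(\overline{X})$ on the nose, rather than just up to a finite ambiguity. This is exactly the content of the CM reciprocity law for K3 surfaces in \cite{valloni-algorithm}, and the careful bookkeeping of how the polarisation and lattice pairing force $\chi\overline{\chi}=1$ is the arithmetic heart of the statement.
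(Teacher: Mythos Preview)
The paper does not actually prove this theorem: its ``proof'' is a citation to \cite{valloni-algorithm} (Pages~6--7 and Corollary~11.3), supplemented by Remark~\ref{remark-level-structures-and-ideals}, which explains why any finite $\co_E$-invariant subgroup of $\Br(\overline{X})\cong E/I^\vee$ is automatically of the form $\Br(\overline{X})[I]\cong\co_E/I$ for a unique ideal $I$. Your proposal is therefore not a competing proof but an attempt to unpack Valloni's argument, and it has the right overall shape: an $\co_E$-module description of $\Br(\overline{X})$, the main theorem of CM for K3 surfaces expressing the Galois action idelically, and class field theory to identify the field cut out. Note, though, that principal CM does not force $\Br(\overline{X})\cong E/\co_E$; as in Remark~\ref{remark-level-structures-and-ideals} one only gets $E/I^\vee$ for some fractional ideal $I^\vee$, and the argument must accommodate this.

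There are two genuine gaps. For part~(1), your logic runs in the wrong direction: choosing $I$ so that $\widetilde{U}_{E,I}\subseteq\chi(G_K)$ only gives $\Br(\overline{X})^{G_K}\subseteq\Br(\overline{X})^{\widetilde{U}_{E,I}}$, not equality, and the norm-one constraint on $\widetilde{U}_{E,I}$ makes even the latter fixed set awkward to compute directly. The cleaner route (and the one the paper singles out in Remark~\ref{remark-level-structures-and-ideals}) is to first argue that $\Br(\overline{X})^{G_K}$ is a finite $\co_E$-submodule---finiteness being a separate nontrivial input you do not mention---and then invoke the classification of such submodules. For part~(2), the sentence ``$\chi$ is trivial on $G_K$'' is meaningless as written, since $\chi$ is by construction a nontrivial character \emph{of} $G_K$. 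The correct statement is the reverse implication: by CM reciprocity the $G_E$-action on $\Br(\overline{X})[I]$ factors through $\Gal(F_{\mathrm{K3},I}(E)/E)$, and since $G_K\subseteq G_E$ fixes $\Br(\overline{X})[I]$ by the choice of $I$, the restriction $G_K\to\Gal(F_{\mathrm{K3},I}(E)/E)$ is trivial, hence $F_{\mathrm{K3},I}(E)\subseteq K$. (Also, $G_{\mathrm{K3},I}(E)$ is the double quotient $U(E)\backslash U(\A_{E,f})/\widetilde{U}_{E,I}$, not the single quotient you wrote.)
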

\begin{proof}
    This is stated\footnote{Read the paragraph starting with ``To see how''.} in \cite[Pages~6-7]{valloni-algorithm}. The result is essentially \cite[Corollary~11.3]{valloni-algorithm}, but we have opted for the former reference since the equivalence 
    $$
    \Br(\overline{X})^{G_K} = \Br(\overline{X})[I] \cong \co_E/I
    $$ 
    is not immediate. 
\end{proof}
\begin{remark}
    \label{remark-level-structures-and-ideals}
    In the proof of Theorem~\ref{thm-existence-of-suitable-ideal}, we noted that it was not obvious to us that 
    $$
    \Br(\overline{X})^{G_K} = \Br(\overline{X})[I] \cong \co_E/I
    $$
    for an ideal $I$ of $\co_E$. Since this took us some time to understand, we sketch a brief explanation here. Let $X$ be a K3 surface defined over $\C$, with CM by $\co_E$. By \cite[Remarks~7.1(3)]{valloni-algorithm}, there is an isomorphism of $\co_E$-modules 
    $$
    E/I^\vee \to \Br(X) 
    $$
    for some nonzero fractional ideal $I^\vee$ of $E$. Thus, finite $\co_E$-invariant subgroups of $\Br(X)$ correspond bijectively to finite $\co_E$-invariant subgroups of $E/I^\vee$, which are precisely the quotients $J/I^\vee$ for fractional ideals $J$ of $E$ with 
    $
    I^\vee \subseteq J. 
    $
    For each ideal $\mathfrak{a}$ of $\co_E$, define 
    $$
    \Br(X)[\mathfrak{a}] = \{b \in \Br(X) : ab = 0 \text{ for all $a \in \mathfrak{a}$}\}.
    $$
    Since $\Br(X) \cong E/I^\vee$, it follows from the definitions that 
    $$
    \Br(X)[\mathfrak{a}] \cong \frac{\mathfrak{a}^{-1}I^\vee}{I^\vee}. 
    $$

    We have mutually inverse bijections 
    $$
    \begin{tikzcd}
    \{\text{ideals of $\co_E$}\} \arrow[rr, "\mathfrak{a}\mapsto \mathfrak{a}^{-1}I^\vee", shift left] & & \{\text{fractional ideals $J$ with $I^\vee \subseteq J$}\} \arrow[ll, "I^\vee J^{-1} \mapsfrom J", shift left],
        \end{tikzcd}
    $$
    so every level structure $B$ (by which Valloni means a finite, $\co_E$-invariant subgroup of $\Br(X)$) corresponds to a unique ideal $\mathfrak{a}$, with the properties that 
    $$
    B \cong \Br(X)[\mathfrak{a}] \cong \co_E/\mathfrak{a}. 
    $$
\end{remark}
Given a CM field $E$, call an ideal $I$ of $\co_E$ \emph{conjugation-invariant} if $\overline{I} = I$. Let $I$ be a conjugation-invariant ideal and write $J = I \cap \co_F$. We fix the following notation:
\begin{enumerate}
    \item $E^{I,1} = \{e \in E^\times : v(e-1) \geq v(I) \text{ for each finite place $v$ with $v(I) > 0$}\}$.
    \item $\co_E^I = \co_E^\times \cap E^{I,1}$.
    \item $H^1(E^{I,1}) = \hat{H}^1(G, E^{I,1})$, where $G = \Gal(E/F)$ and $\hat{H}$ denotes Tate cohomology. 
    \item $e(E/F,J) = \prod_{v\nmid J} e(v)$, where $v$ ranges over all places, finite and infinite. See Section~\ref{subsec-nf-notation} for the definition of $e(v)$. 
\end{enumerate} 
Then Valloni gives us the following formula:
\begin{theorem}[Valloni]
    \label{thm-formula-for-size-of-G-K3-I}
    Let $I$ be a conjugation-invariant, integral ideal of a CM field $E$. We have 
    $$
    \# G_{K3,I}(E) = \frac{2\cdot h_E \cdot \phi_E(I) \cdot [\co_F^\times : N_{E/F}(\co_E^I)]}{h_F \cdot \phi_F(J)\cdot [\co_E^\times : \co_E^I] \cdot e(E/F,J) \cdot \lvert H^1(E^{I,1})\rvert}. 
    $$
\end{theorem}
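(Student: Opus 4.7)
The plan is to unravel the double quotient defining $G_{K3,I}(E)$ via Hilbert 90, compare the result with the ray class group of $E$ modulo $I$, and then use the Tate cohomology of $G = \Gal(E/F)$ on the various unit groups to extract the precise indices.

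First, since $E/F$ is cyclic of order two, both local and global Hilbert 90 ($\hat{H}^{-1}(G, E^\times) = 0$ and $\hat{H}^{-1}(G, \A_{E,f}^\times) = 0$) give short exact sequences
\begin{align*}
1 &\to F^\times \to E^\times \to U(E) \to 1,\\
1 &\to \A_{F,f}^\times \to \A_{E,f}^\times \to U(\A_{E,f}) \to 1,
\end{align*}
where the surjections are $s \mapsto s/\overline{s}$. A snake-lemma chase on the resulting diagram, combined with pulling $\widetilde{U}_{E,I}$ back along $s \mapsto s/\overline{s}$, rewrites
$$G_{K3,I}(E) \;\cong\; \A_{E,f}^\times \big/ \bigl(E^\times \cdot \A_{F,f}^\times \cdot N_I\bigr),\quad N_I := \{s \in \A_{E,f}^\times : s/\overline{s} \in \widetilde{U}_{E,I}\}.$$
This rewriting is the natural launching point: the quotient $\A_{E,f}^\times/(E^\times \cdot N_I)$ is essentially the ray class group of $E$ modulo $I$, so $G_{K3,I}(E)$ becomes isomorphic to a quotient of that ray class group by the image of $\A_{F,f}^\times$.

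Next, I would interlock this identification with the analogous description on the $F$-side to build a diagram whose horizontal arrows are the idelic ray-class sequences and whose vertical arrows are the norm maps for $E/F$. Applying the snake lemma to this diagram presents $G_{K3,I}(E)$ as the cokernel of a comparison map whose size can be read off from the ray class groups of $E$ modulo $I$ and $F$ modulo $J$, together with adjustments coming from the norm map at the level of units. The contributions $h_E \phi_E(I)$ and $h_F \phi_F(J)$ appear from the standard ray-class-number formulas, while the unit indices $[\co_E^\times : \co_E^I]$, $[\co_F^\times : N_{E/F}(\co_E^I)]$, and the term $|H^1(E^{I,1})|$ arise from applying the long exact Tate cohomology sequence of $G$ to $E^{I,1}$ and its intersection with $\co_E^\times$. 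The archimedean and ramified local contributions are handled by local class field theory: at each place $v$ of $F$ outside $J$, the local norm index $[\co_{F,v}^\times : N_{E_w/F_v}(\co_{E,w}^\times)]$ equals $e(v)$ (with the archimedean case built into the paper's definition of $e(v)$), and multiplying these over $v \nmid J$ yields $e(E/F,J)$.

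The main obstacle will be the bookkeeping: reconciling the global factor of $2 = [E:F]$ (which enters via the Herbrand quotient of $E^\times$ as a $G$-module, or equivalently via the global norm residue index in the fundamental exact sequence for $C_E/C_F$) against the infinite-place contribution to $e(E/F,J)$, and carefully tracking where the connecting map of the snake lemma contributes a factor of $|H^1(E^{I,1})|$ versus where the $\hat{H}^0$ contribution $[\co_F^\times : N_{E/F}(\co_E^I)]$ appears. Once the compatible idelic diagrams have been set up coherently and Tate periodicity applied, each factor in the stated formula corresponds to a specific index or cohomology term in the resulting exact sequences, and the identity follows by multiplying out.
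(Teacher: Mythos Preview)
The paper does not actually prove this theorem: its entire proof reads ``This is \cite[Corollary~10.4]{valloni-algorithm}.'' The formula is imported as a black box from Valloni's paper, and the present paper simply builds on top of it, so there is no in-paper argument to compare your outline against.

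That said, your sketch is a plausible strategy and is broadly in the spirit of what Valloni does in Section~10 of \cite{valloni-algorithm}. A few remarks on the outline itself. The identification
\[
G_{K3,I}(E) \;\cong\; \A_{E,f}^\times \big/ (E^\times \cdot \A_{F,f}^\times \cdot N_I)
\]
is correct, though note that $\A_{F,f}^\times \subseteq N_I$ already (since $s \in \A_{F,f}^\times$ has $s/\overline{s} = 1 \in \widetilde{U}_{E,I}$), so the middle factor is redundant. Your identification of $e(E/F,J)$ with a product of local norm indices at places away from $J$ is the right mechanism, and the appearance of $|H^1(E^{I,1})|$ does indeed emerge from the Tate cohomology of $G$ acting on the level-$I$ units. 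As you yourself flag, the substantive content is the bookkeeping that reconciles the global factor of $2$, the archimedean contribution, and the various unit indices into a single clean formula; your sketch does not carry this out, but neither does the present paper. If you want the details, consult Valloni's Section~10 directly.
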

\begin{proof}
    This is \cite[Corollary~10.4]{valloni-algorithm}.
\end{proof}

\begin{definition}
    \label{defi-permissible-ideal}
    Let $k$ be a number field and let $E$ be a CM field. A \emph{$k$-permissible} ideal of $\co_E$ is a conjugation-invariant ideal $I$ of $\co_E$, such that 
    $$
    \frac{\phi_E(I)}{\phi_F(J)} \leq \frac{1}{2}\cdot [k : \Q] \cdot \frac{[\co_E^\times : \co_E^I]}{[\co_F^\times : N_{E/F}(\co_E^I)]} \cdot \frac{h_F}{h_E} \cdot e(E/F,J) \cdot \lvert H^1(E^{I,1})\rvert,
    $$
    where $J = I\cap \co_F$. 
\end{definition}

\begin{corollary}[Essentially due to Valloni]
    \label{cor-brauer-bounded-by-Nm-I}
    Let $k$ be a number field and let $E$ be a CM field. For any K3 surface defined over $k$ with CM by $\co_E$, there is a $k$-permissible ideal $I$ of $\co_E$ such that 
    $$
    \# \big(\Br(X) / \Br_1(X)\big) \leq \Nm(I).
    $$ 
\end{corollary}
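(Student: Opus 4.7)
The plan is to reduce to the situation of Theorem~\ref{thm-existence-of-suitable-ideal} by base changing $X$ to the compositum $K = kE$, extract an ideal $I$ from that theorem, and then verify the norm bound and $k$-permissibility in turn.

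First I would set $K = kE$, which is a number field containing $E$. The base change $X_K$ is then a K3 surface over $K$ with CM by $\co_E$, so Theorem~\ref{thm-existence-of-suitable-ideal} produces an ideal $I \subseteq \co_E$ satisfying $\Br(\overline{X})^{G_K} \cong \co_E / I$ and $F_{\mathrm{K3}, I}(E) \subseteq K$.

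For the norm bound, the very definition $\Br_1(X) = \ker\bigl(\Br(X) \to \Br(\overline{X})^{G_k}\bigr)$ gives an injection $\Br(X)/\Br_1(X) \hookrightarrow \Br(\overline{X})^{G_k}$. Since $G_K \subseteq G_k$, taking invariants yields $\Br(\overline{X})^{G_k} \subseteq \Br(\overline{X})^{G_K} \cong \co_E/I$, whose size is $\Nm(I)$. This immediately gives $\#(\Br(X)/\Br_1(X)) \leq \Nm(I)$.

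For $k$-permissibility, I would first observe that, after clearing denominators in the formula of Theorem~\ref{thm-formula-for-size-of-G-K3-I}, the inequality in Definition~\ref{defi-permissible-ideal} is just an algebraic rearrangement of $\#G_{\mathrm{K3}, I}(E) \leq [k : \Q]$. By class field theory, $F_{\mathrm{K3}, I}(E)$ is an abelian extension of $E$ with Galois group $G_{\mathrm{K3}, I}(E)$, so condition (2) from Theorem~\ref{thm-existence-of-suitable-ideal} yields
$$
\#G_{\mathrm{K3}, I}(E) = [F_{\mathrm{K3}, I}(E) : E] \leq [kE : E] = [k : k \cap E] \leq [k : \Q],
$$
which is exactly the required bound. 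The only step that takes more than a moment is checking that Definition~\ref{defi-permissible-ideal} really does rearrange to $\#G_{\mathrm{K3}, I}(E) \leq [k : \Q]$, but this is purely formal once Theorem~\ref{thm-formula-for-size-of-G-K3-I} is in hand, so there is no essential obstacle: all the substantive content has already been done in Valloni's results.
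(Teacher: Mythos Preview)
Your argument has a genuine gap: the ideal $I$ produced by Theorem~\ref{thm-existence-of-suitable-ideal} is not asserted to be conjugation-invariant, yet both Definition~\ref{defi-permissible-ideal} (which requires $\overline{I}=I$ as part of what it means to be $k$-permissible) and Theorem~\ref{thm-formula-for-size-of-G-K3-I} (which is only stated for conjugation-invariant ideals) need this hypothesis. So as written you cannot invoke the size formula for $G_{\mathrm{K3},I}(E)$, and even if you could bound $\#G_{\mathrm{K3},I}(E)$ by $[k:\Q]$, the resulting $I$ would not literally be $k$-permissible.

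The paper handles this by first taking the ideal $I_0$ supplied by Theorem~\ref{thm-existence-of-suitable-ideal}, then passing to $I = I_0 \cap \overline{I}_0$, which is conjugation-invariant by construction. Two small facts then close the gap: first, $G_{\mathrm{K3},I}(E) = G_{\mathrm{K3},I_0}(E)$ (a statement the paper cites from Valloni), so $F_{\mathrm{K3},I}(E)\subseteq K$ still holds and your degree estimate goes through for $I$; second, $\Nm(I)\geq \Nm(I_0)$, so the norm bound on the transcendental Brauer group survives the replacement. With this one modification your outline becomes essentially the paper's own proof.
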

\begin{proof}
    Let $X$ be a K3 surface over $k$ with CM by $\co_E$. Let $K = kE$. Then Theorem~\ref{thm-existence-of-suitable-ideal} tells us that there is an ideal $I_0$ of $\co_E$ with $F_{K3,I_0}(E)\subseteq K$ and $\Br(\overline{X})^{G_K} \cong \co_E/I_0$. Let $I = I_0 \cap \overline{I}_0$, so that $I$ is conjugation-invariant. On \cite[Page~40]{valloni-algorithm}, Valloni states that $G_{K3, I}(E) = G_{K3,I_0}(E)$. Clearly $\Nm(I) \geq \Nm(I_0)$, so we have 
    $$
    \# \Br(\overline{X})^{G_K} \leq \Nm(I) 
    $$
    and 
    $$
    E \subseteq F_{K3, I}(E) \subseteq K. 
    $$
    It follows that 
    $$
    \# G_{K3, I}(E) \mid [K : E] \leq [k : \Q],
    $$
    so Theorem~\ref{thm-formula-for-size-of-G-K3-I} implies that $I$ is $k$-permissible.
    Finally, we have inclusions 
    $$
    \Br(X) / \Br_1(X) \hookrightarrow \Br(X_K)/\Br_1(X_K)\hookrightarrow \Br(\overline{X})^{G_K},
    $$
    where $X_K$ is the base change of $X$ to $K$, so 
    $$
    \# (\Br(X) / \Br_1(X)) \leq \Nm(I).
    $$
\end{proof}

So our goal is to bound the norms of $k$-permissible ideals of $\co_E$, in terms of $E$ and $k$. 

\section{Making the totient bound explicit}
\label{sec-totient-bound}

Throughout this section, $k$ is a number field, $E$ is a CM field with maximal totally real subfield $F$, and $I$ is a $k$-permissible ideal of $\co_E$. Write $J = I \cap \co_F$. The bound on $\frac{\phi_E(I)}{\phi_F(J)}$ in Definition~\ref{defi-permissible-ideal} is quite inexplicit. In the current section, we obtain a weaker, but explicit bound. In Section~\ref{sec-deduce-norm-bound}, we will use this explicit bound to deduce bounds on $\Nm(I)$, hence on the transcendental Brauer group. 

\begin{lemma}[Bounding the ratio of subgroup indices]
    \label{lem-bound-on-subgroup-index-ratio}
    There exists a positive integer $m$ such that 
    $$
    \frac{[\co_E^\times : \co_F^\times]}{2^{[F:\Q] - 1}} = m \cdot \frac{[\co_E^\times : \co_E^I]}{[\co_F^\times : N_{E/F}(\co_E^I)]}. 
    $$
\end{lemma}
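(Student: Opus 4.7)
My plan is to rewrite both sides in terms of the norm map $N := N_{E/F}:\co_E^\times \to \co_F^\times$ and its kernel $U_0 := \{u\in \co_E^\times : u\overline{u} = 1\}$. Since $\co_E^\times$ and $\co_F^\times$ both have free rank $[F:\Q]-1$ by Dirichlet's unit theorem, the kernel $U_0$ is torsion, and combined with the trivial inclusion $\mu(E) \subseteq U_0$ this forces $U_0 = \mu(E)$. In particular, writing $w_E := |\mu(E)|$, we have $|U_0| = w_E$, and all the indices appearing in the lemma are finite by the same rank argument.

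The first step is to prove the ``global'' identity
$$
[\co_E^\times : \co_F^\times]\cdot [\co_F^\times : N(\co_E^\times)] = 2^{[F:\Q]-1}\cdot w_E.
$$
I will do this by studying the homomorphism $\phi : \co_E^\times/\co_F^\times \to \co_F^\times/(\co_F^\times)^2$ given by $u\cdot \co_F^\times \mapsto u\overline{u}\cdot(\co_F^\times)^2$, which is well-defined because $N(u) = u^2$ for $u\in \co_F^\times$. Its image is $N(\co_E^\times)/(\co_F^\times)^2$, while its kernel is $U_0\cdot\co_F^\times/\co_F^\times \cong U_0/\{\pm 1\}$: if $u\overline{u} = w^2$ with $w \in \co_F^\times$, then $u/w\in U_0$ and represents the same coset as $u$. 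Combining with the standard computation $[\co_F^\times : (\co_F^\times)^2] = 2^{[F:\Q]}$ yields the identity.

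The second step, which I expect to be the main obstacle because of the index bookkeeping, is the ``level-$I$'' identity
$$
\frac{[\co_E^\times : \co_E^I]}{[\co_F^\times : N(\co_E^I)]} = \frac{[U_0 : U_0\cap\co_E^I]}{[\co_F^\times : N(\co_E^\times)]}.
$$
The crux is the claim $[N(\co_E^\times) : N(\co_E^I)] = [\co_E^\times : \co_E^I]/[U_0 : U_0\cap \co_E^I]$: since $\ker(N|_{\co_E^\times}) = U_0$, the preimage of $N(\co_E^I)$ in $\co_E^\times$ is exactly $\co_E^I\cdot U_0$, and the first and second isomorphism theorems then give the claim. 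The identity follows by multiplicativity of indices along the tower $N(\co_E^I)\subseteq N(\co_E^\times)\subseteq \co_F^\times$.

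Dividing the global identity by the level-$I$ identity produces
$$
m = \frac{[\co_E^\times : \co_F^\times]\cdot [\co_F^\times : N(\co_E^I)]}{2^{[F:\Q]-1}\cdot [\co_E^\times : \co_E^I]} = \frac{w_E}{[U_0 : U_0\cap \co_E^I]} = |\mu(E)\cap \co_E^I|,
$$
which is manifestly a positive integer, completing the proof.
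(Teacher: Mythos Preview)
Your proof is correct and in fact yields a sharper conclusion than the paper's: you identify the integer $m$ explicitly as $m = \lvert \mu(E)\cap \co_E^I\rvert$, whereas the paper only shows that such an $m$ exists. The two arguments are organised around different pieces of the unit group. The paper works with the \emph{free part}: it writes $\co_F^\times = \{\pm 1\}\times A$ with $A$ free of rank $[F:\Q]-1$, sets $A^I = A\cap E^{I,1}$, uses that $N_{E/F}$ restricts to squaring on $A$ to obtain $[A^I : N_{E/F}(A^I)] = 2^{[F:\Q]-1}$, and then chases indices around a subgroup diagram, concluding via a surjectivity argument that $[N_{E/F}(\co_E^I):N_{E/F}(A^I)]$ divides $[\co_E^I:A^I]$. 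You instead work with the \emph{torsion part} via the norm kernel $U_0=\mu(E)$; your global and level-$I$ identities replace the diagram chase and pin down $m$ exactly. Your route is a little more conceptual and gives more information; the paper's route avoids computing $[\co_F^\times:(\co_F^\times)^2]$ and the kernel of the norm, at the cost of a less precise conclusion.

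One small expositional point: the sentence ``$\co_E^\times$ and $\co_F^\times$ both have free rank $[F:\Q]-1$, hence $U_0$ is torsion'' is not quite complete as stated, since equal ranks of domain and codomain do not by themselves bound the kernel. What you need (and clearly intend) is that $N(\co_E^\times)$ has full rank, which follows because it contains $(\co_F^\times)^2$; you use exactly this fact later in computing $[\co_F^\times:(\co_F^\times)^2]$, so it would be cleanest to invoke it up front.
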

\begin{proof}
    By Dirichlet's unit theorem, there is a free abelian group $A$ of rank $[F : \Q] - 1$ such that $\co_F^\times = \{\pm 1\} \times A$. Also by Dirichlet's unit theorem, the group $A$ has finite index as a subgroup of $\co_E^\times$. Define 
    $$
    A^I = A \cap E^{I,1}.
    $$
    Since $E^{I,1}$ and $A$ are both subgroups of $E^\times$, their intersection $A^I$ is too. Moreover, we have a well-defined, injective map 
    $$
    A/A^I \to (\co_E/I)^\times,\quad [x] \mapsto [x],
    $$
    so $[A : A^I] \leq \phi_E(I)$, and therefore $A^I$ is free of rank $[F:\Q] - 1$. Since $N_{E/F}$ restricts to the squaring map on $F^\times$, we have 
    $$
    [A^I : N_{E/F}A^I] = 2^{[F:\Q] - 1},
    $$
    and $N_{E/F}A^I$ is a free abelian group of rank $[F:\Q] - 1$. Thus, all indices in the diagram 
    $$
        \begin{tikzcd}
            &                                                    & \co_E^\times                       &                                  \\
            & \co_E^I \arrow[ru, no head]                        &                                    & \co_F^\times \arrow[lu, no head] \\
    A^I \arrow[ru, no head] &                                                    & N_{E/F}\co_E^I \arrow[ru, no head] &                                  \\
            & N_{E/F}A^I \arrow[ru, no head] \arrow[lu, no head] &                                    &                                 
    \end{tikzcd}
    $$
    are finite, and we have 
    $$
    \frac{[\co_E^\times : \co_E^I]}{[\co_F^\times : N_{E/F}
    \co_E^I]} = \frac{[\co_E^\times : \co_F^\times]}{[A^I : N_{E/F}A^I]} \cdot \frac{[N_{E/F}\co_E^I : N_{E/F}A^I]}{[\co_E^I : A^I]}.
    $$
    The map $N_{E/F}$ descends to a well-defined group epimorphism 
    $$
    \co_E^I / A^I \to N_{E/F}(\co_E^I) / N_{E/F}(A^I),
    $$
    so 
    $$
    [N_{E/F}\co_E^I : N_{E/F}A^I] \mid [\co_E^I : A^I], 
    $$
    and the result follows since $[A^I : N_{E/F}A^I] = 2^{[F:\Q] - 1}$. 
\end{proof}
\begin{lemma}[Bounding ramification away from $J$]
    \label{lem-bound-on-e-E-F-J}
    We have 
    $$
    e(E/F, J) \leq 2^{[F : \Q] + \omega(d_{E/F})}. 
    $$
\end{lemma}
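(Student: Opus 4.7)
The plan is to observe that since $E/F$ is a quadratic extension, every local ramification index $e(v)$ takes values in $\{1, 2\}$, so the product defining $e(E/F,J)$ is a power of $2$ whose exponent is bounded by the number of ramified places of $F$ in $E$. Since the bound sought does not depend on $J$, I will simply drop the condition $v\nmid J$ and bound the larger product $\prod_{v} e(v)$ over all places of $F$.

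First, I would split the product into archimedean and non-archimedean parts. For the archimedean part, $F$ is totally real, so it has exactly $[F:\Q]$ infinite places, each contributing a factor of at most $2$. This yields the factor $2^{[F:\Q]}$ in the bound. For the non-archimedean part, a finite place $v$ of $F$ has $e(v) = 2$ precisely when $v$ is ramified in $E/F$, which, since $E/F$ is separable, happens exactly when the underlying prime ideal of $\co_F$ divides the relative discriminant $d_{E/F}$. The number of such primes is exactly $\omega(d_{E/F})$, contributing at most $2^{\omega(d_{E/F})}$.

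Multiplying these two contributions gives
$$
e(E/F, J) \leq \prod_{v} e(v) \leq 2^{[F:\Q]} \cdot 2^{\omega(d_{E/F})} = 2^{[F:\Q] + \omega(d_{E/F})},
$$
which is the desired inequality. There is no real obstacle here; the only thing to be careful about is ensuring that the definition of $e(v)$ at infinite places (as given in Section~\ref{subsec-nf-notation}) really does agree with our intuition that $e(v) = 2$ iff the place becomes complex in $E$, and that the count of ramified finite primes equals $\omega(d_{E/F})$ rather than something like $\omega(\rad(d_{E/F}))$, but these are equal since $\omega$ counts distinct prime divisors.
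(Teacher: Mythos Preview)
Your proof is correct and follows essentially the same approach as the paper: split into archimedean and non-archimedean places, use that $F$ has $[F:\Q]$ real places (each contributing at most $2$) and that the ramified finite places are exactly the $\omega(d_{E/F})$ primes dividing the relative discriminant, and bound $e(E/F,J)$ by the full product over all places. The paper's version is slightly terser and notes that all archimedean places actually do ramify (since $E$ is totally imaginary), but this refinement is not needed for the inequality.
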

\begin{proof}
    Since $F$ is totally real, it has $[F:\Q]$ archimedean places, all of which ramify since $E$ is totally imaginary. It also has $\omega(d_{E/F})$ nonarchimedean places that ramify in $E$. The product $e(E/F,J)$ is at most the product of $e(v) = 2$ at all of these places, which is equal to $2^{[F : \Q] + \omega(d_{E/F})}$. 
\end{proof}
Recall from Section~\ref{subsec-nf-notation} that we write $I_2$ and $J_2$ for the 2-parts of $I$ and $J$, respectively. 
Valloni gives us the following bound on the size of $H^1(E^{I,1})$:
\begin{lemma}
    [Valloni]
    \label{lem-valloni-bound-on-tate-cohomology}
    We have 
    $$
    \#\big(H^1(E^{I,1})\big) \mid \hspace{1em} [(\co_E/I_2)^{\times, G} : (\co_F/J_2)^\times] \cdot \prod_{\p \mid J_2}e(\p),
    $$
    where the product is over primes $\p$ of $\co_F$ with $\p \mid J_2$, and $e(\p)$ is as defined in Section~\ref{subsec-nf-notation}. 
\end{lemma}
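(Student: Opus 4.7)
The plan is to bound $\hat H^1(G, E^{I,1})$ by relating it, via short exact sequences and Hilbert 90, to purely $2$-adic local data. My first step is to reduce the computation to the $2$-part of $I$. Concretely, I would study the inclusion $E^{I,1} \hookrightarrow E^{I_2,1}$: the quotient $E^{I_2,1}/E^{I,1}$ embeds into $\prod_{v \mid I_{\text{odd}}} E_v^\times/(1 + I\co_{E,v})$, and I would analyse the Tate cohomology place by place. Split primes yield induced modules with trivial cohomology, and inert odd primes yield local unit groups whose principal-unit filtration has graded pieces of odd residue characteristic and hence vanishing Tate cohomology. Ramified odd primes are more subtle: the top graded piece $\F_q^\times$ has $\hat H^1 \cong \Z/2$, so one must argue that these contributions are killed by the connecting homomorphism in the long exact sequence.

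Having reduced to the $2$-part, I would use the short exact sequence
$$1 \to E^{I_2,1} \to E^{(2)} \to (\co_E/I_2)^\times \to 1,$$
where $E^{(2)} = \{e \in E^\times : v(e) = 0 \text{ for all finite } v \mid I_2\}$. The induced long exact sequence in Tate cohomology gives
$$\#\hat H^1(G,E^{I_2,1}) \leq \#\coker\bigl(\hat H^0(G, E^{(2)}) \to \hat H^0(G, (\co_E/I_2)^\times)\bigr) \cdot \#\hat H^1(G, E^{(2)}).$$
The cokernel here equals $(\co_E/I_2)^{\times,G}/(\co_F/J_2)^\times$: the norm map factors through $(\co_F/J_2)^\times$, and the natural map $E^{(2)} \to (\co_F/J_2)^\times$ is surjective, so the image of $\hat H^0(G, E^{(2)})$ in $\hat H^0(G, (\co_E/I_2)^\times)$ is precisely $(\co_F/J_2)^\times / N((\co_E/I_2)^\times)$. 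To bound $\#\hat H^1(G, E^{(2)})$, I would use the exact sequence $1 \to E^{(2)} \to E^\times \to \bigoplus_{v \mid I_2}\Z \to 0$ together with Hilbert 90 ($\hat H^1(G, E^\times) = 0$) to identify it with the cokernel of $F^\times/N(E^\times) \to \hat H^0(G, \bigoplus_v \Z)$. Going through primes $\p \mid J_2$ of $F$: split primes contribute nothing (induced modules), inert primes also contribute nothing (since the map $F^\times \to \Z/2$ given by $f \mapsto v_{\mathfrak{P}}(f) \bmod 2 = w_\p(f) \bmod 2$ is surjective), and ramified primes each contribute $\Z/2$ since $v_{\mathfrak{P}}(f) = 2w_\p(f)$ is always even for $f \in F^\times$. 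Together these yield the desired factor $\prod_{\p \mid J_2} e(\p)$.

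The main obstacle will be Step 1, the reduction to the $2$-part. Naively, the presence of ramified odd primes suggests that $\hat H^1(G, E^{I_2,1}/E^{I,1})$ contains nontrivial contributions, which complicates the direct comparison. If the connecting-homomorphism argument proves recalcitrant, an alternative is to bypass this reduction entirely: carry out the analyses of Steps 2 and 3 with the full ideal $I$, track the odd-prime contributions through the resulting expression, and verify that they can be matched against the stated bound.
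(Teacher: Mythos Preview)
The paper does not prove this lemma at all: the proof reads, in its entirety, ``This follows immediately from \cite[Proposition~10.5]{valloni-algorithm}.'' The statement is explicitly attributed to Valloni, and the paper simply imports it. Your proposal, by contrast, is a from-scratch derivation of Valloni's bound via Tate cohomology of $G=\Gal(E/F)$, so the two approaches are as different as they can be: citation versus reconstruction. Your route has the advantage of being self-contained and of explaining \emph{why} only the $2$-part of $I$ matters; the paper's route has the advantage of being one line.

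Your outline is essentially sound, and your worry about Step~1 is unwarranted. For an odd place $v\mid I$ the principal unit group $U_v^{(a)}$ (for $a\geq 1$) is pro-$p$ with $p$ odd, hence has trivial Tate cohomology for the order-$2$ group $G$; feeding this into $0\to U_v^{(a)}\to E_v^\times\to E_v^\times/U_v^{(a)}\to 0$ and using local Hilbert~90 gives $\hat H^1(G,E_v^\times/U_v^{(a)})=0$ and $\hat H^0\cong\Z/2$ at every nonsplit odd $v$, split places being induced. Weak approximation makes $\hat H^0(G,E^{I_2,1})\to\hat H^0(G,Q)$ surjective, so in fact $\hat H^1(G,E^{I,1})\cong\hat H^1(G,E^{I_2,1})$ and no delicate connecting-map argument is needed. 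One small correction in Step~2: you assert the cokernel \emph{equals} $(\co_E/I_2)^{\times,G}/(\co_F/J_2)^\times$; this is actually true, because $N_{E/F}$ takes values in $\co_F$ and hence $N((\co_E/I_2)^\times)$ already lies in the image of $(\co_F/J_2)^\times$, but it is worth saying explicitly since a priori one only gets a quotient of that group (which would still suffice for the divisibility). Step~3 goes through exactly as you describe.
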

\begin{proof}
    This follows immediately from \cite[Proposition~10.5]{valloni-algorithm}. 
\end{proof}

Let $a_P$ be the unique integers such that 
$$
I = \prod_P P^{a_P},
$$
where the product is over all prime ideals of $\co_E$. Let $\p$ be a prime ideal of $\co_F$, and let $P$ be a prime of $\co_E$ lying over $\p$. Define 
$$
I_\p = \begin{cases}
    (P\overline{P})^{a_P} \quad\text{if $\p$ splits in $E$},
    \\
    P^{a_P}\quad\text{otherwise},
\end{cases}
$$
and 
$$
J_\p = \p^{\lceil\frac{a_P}{e(\p)}\rceil},
$$
where as usual $e(\p)$ is the ramification index of $\p$ in $E$. Note that $I_\p$ is well-defined since $I$ is conjugation-invariant. 
\begin{lemma}
    We have 
    $$
    I_2 = \prod_{\p\mid 2} I_\p ,\quad J_2 = \prod_{\p \mid 2} J_\p,
    $$
    where both products range over all primes of $\co_F$ containing $2$. 
\end{lemma}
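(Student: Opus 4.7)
The plan is to verify both identities prime-by-prime at each prime $\p$ of $\co_F$ above $2$, using the explicit decomposition of primes in the extension $E/F$. Both $I_2$ and $J_2$ are, by definition of the $2$-part, products of local factors indexed by primes above $2$, so it suffices to check that the $\p$-contribution of each side agrees for every such $\p$.

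For the first identity, I would start from $I_2 = \prod_{P \mid 2} P^{a_P}$ and regroup the primes $P$ of $\co_E$ above $2$ according to the prime $\p = P \cap \co_F$ of $\co_F$ below them, yielding $I_2 = \prod_{\p \mid 2}\prod_{P \mid \p}P^{a_P}$. It then remains to identify the inner product with $I_\p$. If $\p$ splits as $P\overline{P}$, the conjugation-invariance of $I$ forces $a_{\overline{P}} = a_P$, so the inner product becomes $(P\overline{P})^{a_P}$; if $\p$ is inert or ramified, there is a unique prime $P$ above $\p$ and the inner product is $P^{a_P}$. Both cases match the two clauses in the definition of $I_\p$.

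For the second identity, the plan is to compute $v_\p(J) = v_\p(I \cap \co_F)$ and show it equals $\lceil a_P/e(\p)\rceil$, where $P$ is any prime of $\co_E$ above $\p$. For $x \in \co_F$, one has $v_P(x) = e(P/\p)v_\p(x) = e(\p)v_\p(x)$ in all three decomposition cases, and in the split case additionally $v_{\overline{P}}(x) = v_P(x)$ with $a_{\overline{P}} = a_P$, so the conditions at $P$ and $\overline{P}$ coincide. Hence $x \in I$ is equivalent, at the prime $\p$, to $e(\p)v_\p(x) \geq a_P$, i.e.\ $v_\p(x) \geq \lceil a_P/e(\p)\rceil$, giving exactly the exponent in the definition of $J_\p$.

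Since the proof is essentially bookkeeping, I do not anticipate any real obstacle. The only step requiring mild care is to handle the split, inert, and ramified cases uniformly and to invoke conjugation-invariance of $I$ in the split case to ensure the two primes above $\p$ impose the same congruence.
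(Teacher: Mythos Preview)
Your proposal is correct and follows essentially the same approach as the paper: both verify the identities prime-by-prime, with the first identity being immediate from the definitions (using conjugation-invariance in the split case) and the second amounting to the computation $v_\p(I\cap\co_F) = \lceil a_P/e(\p)\rceil$. The only cosmetic difference is that the paper phrases the second step as showing $I_\p \cap \co_F = J_\p$ via the local extension $E_P/F_\p$, whereas you compute the $\p$-valuation of $J$ directly; these are the same calculation.
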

\begin{proof}
    We have $I_2 = \prod_{\p\mid 2} I_\p$ by definition of the ideals $I_\p$. Since the ideals $I_\p$ are coprime, we have 
    $$
    I_2 = \bigcap_{\p\mid 2} I_\p.
    $$
    Since $J_2 = I_2 \cap \co_F$, it follows that 
    $$
    J_2 = \bigcap_{\p\mid 2} (I_\p \cap \co_F).
    $$
    Let $\p$ be a prime of $\co_F$ and let $P$ be a prime of $\co_E$ lying over $\p$. By considering the extension $E_P / F_\p$ of local fields, it is easy to see that 
    \begin{equation}
        \label{eqn-P-aP-cap-co_F}
    P^{a_P} \cap \co_F = \p^{\lceil \frac{a_P}{e(\p)}\rceil} = J_\p.
    \end{equation}
    If $\p$ does not split in $E$, then Equation~(\ref{eqn-P-aP-cap-co_F}) is precisely 
    $$
    J_\p = I_\p \cap \co_F. 
    $$
    If $\p$ does split in $E$, then $I_\p = P^{a_P} \cap \overline{P}^{a_P}$, so Equation~(\ref{eqn-P-aP-cap-co_F}) still implies that $J_\p = I_\p \cap \co_F$, and the result follows. 
\end{proof}
\begin{lemma}
    \label{lem-fixed-subgroup-index-product-formula}
    We have
    $$
    [(\co_E/I_2)^{\times, G} : (\co_F/J_2)^\times] = \prod_{\p \mid 2} [(\co_E/I_\p)^{\times, G} : (\co_F/J_\p)^\times]. 
    $$
\end{lemma}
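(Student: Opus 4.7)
The plan is to reduce this to a straightforward application of the Chinese Remainder Theorem, using the previous lemma which told us that $I_2 = \prod_{\p \mid 2} I_\p$ and $J_2 = \prod_{\p \mid 2} J_\p$. Since the ideals $I_\p$ (respectively $J_\p$) are pairwise coprime as $\p$ ranges over primes of $\co_F$ dividing $2$, CRT gives ring isomorphisms
$$
\co_E/I_2 \xrightarrow{\sim} \prod_{\p \mid 2} \co_E/I_\p, \qquad \co_F/J_2 \xrightarrow{\sim} \prod_{\p \mid 2} \co_F/J_\p,
$$
which pass to isomorphisms on unit groups.

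Next I would observe that each factor $I_\p$ is conjugation-invariant by construction, so the Galois action of $G$ on $\co_E/I_2$ permutes the factors trivially; hence the CRT isomorphism is $G$-equivariant, and taking $G$-invariants commutes with the product, yielding
$$
(\co_E/I_2)^{\times, G} \xrightarrow{\sim} \prod_{\p \mid 2} (\co_E/I_\p)^{\times, G}.
$$
The natural inclusion $\co_F \hookrightarrow \co_E$ descends to $\co_F/J_2 \hookrightarrow \co_E/I_2$ since $J_2 = I_2 \cap \co_F$, and similarly $\co_F/J_\p \hookrightarrow \co_E/I_\p$. These inclusions are compatible with the CRT decompositions, so the inclusion $(\co_F/J_2)^\times \hookrightarrow (\co_E/I_2)^{\times, G}$ fits into a commutative square of the above form with its componentwise analogue.

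Once compatibility is established, the index identity is formal: for any inclusion of a product of abelian groups into another product of abelian groups which respects the product structure, the total index factors as the product of componentwise indices. This step is purely bookkeeping. The main thing to be careful about, and the only step that requires any thought, is checking the $G$-equivariance of the CRT isomorphism — but this is immediate from $\overline{I_\p} = I_\p$, which holds because $I_\p$ is defined symmetrically in $P$ and $\overline{P}$ (when $\p$ splits) or from a single prime (when $\p$ is inert or ramified).
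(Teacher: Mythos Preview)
Your proposal is correct and follows essentially the same approach as the paper: both arguments use the Chinese Remainder Theorem to obtain a $G$-equivariant product decomposition of $(\co_E/I_2)^\times$ compatible with the inclusion of $(\co_F/J_2)^\times$, and then read off the index factorisation. Your write-up is slightly more explicit about why the CRT isomorphism is $G$-equivariant, but the substance is identical.
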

\begin{proof}
    By the Chinese remainder theorem, we have a commutative diagram 
    $$
    \begin{tikzcd}
        \Big(\frac{\co_E}{I_2}\Big)^{\times, G} \arrow[r, "\cong"]                  & \bigoplus_{\p\mid 2} \Big(\frac{\co_E}{I_\p}\Big)^{\times, G}                 \\
        \Big(\frac{\co_F}{J_2}\Big)^{\times} \arrow[u, hook] \arrow[r, "\cong"'] & \bigoplus_{\p\mid 2} \Big(\frac{\co_F}{J_\p}\Big)^{\times}, \arrow[u, hook]
        \end{tikzcd}
    $$
    where every map is the most natural thing it could be, and the natural actions of $G$ are well-defined since $I_2$ and the $I_\p$ are conjugation-invariant. For each $\p$, the vertical inclusion on the right-hand side restricts to an inclusion
    $$
    \Big(\frac{\co_F}{J_\p}\Big)^{\times} \hookrightarrow \Big(\frac{\co_E}{I_\p}\Big)^{\times, G},
    $$
    and the result follows.
\end{proof}
\begin{lemma}
    \label{lem-fixed-subgroup-index-split}
    If $\p$ splits in $E$, then 
    $$
    [(\co_E/I_\p)^{\times, G} : (\co_F/J_\p)^\times]  = 1. 
    $$
\end{lemma}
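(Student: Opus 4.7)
The plan is to exploit the split behaviour of $\p$ to decompose $\co_E/I_\p$ as a product of two copies of $\co_F/J_\p$ on which the nontrivial element of $G$ acts by swapping the factors. Once this is established, taking $G$-fixed points will cut out exactly the diagonal, which coincides with the image of the inclusion $\co_F/J_\p \hookrightarrow \co_E/I_\p$, and the index will therefore equal $1$.

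First, I would observe that since $\p$ splits in $E$, we have $\p\co_E = P\overline{P}$ with $P \neq \overline{P}$, and in particular $e(\p) = 1$. The conjugation-invariance of $I$ forces the exponents of $P$ and $\overline{P}$ in $I$ to coincide, so unwinding the definitions gives $I_\p = P^{a_P}\overline{P}^{a_P} = \p^{a_P}\co_E$ and $J_\p = \p^{\lceil a_P/1\rceil} = \p^{a_P}$. By the Chinese remainder theorem there is a ring isomorphism
$$
\co_E/I_\p \;\xrightarrow{\;\sim\;}\; \co_E/P^{a_P} \times \co_E/\overline{P}^{a_P}.
$$
Because $\p$ is split, the completion $E_P$ equals $F_\p$, so the natural maps $\co_F/\p^{a_P} \to \co_E/P^{a_P}$ and $\co_F/\p^{a_P} \to \co_E/\overline{P}^{a_P}$ are isomorphisms. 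Composing, we obtain an identification $\co_E/I_\p \cong \co_F/J_\p \times \co_F/J_\p$ under which the natural inclusion $\co_F/J_\p \hookrightarrow \co_E/I_\p$ becomes the diagonal, while the nontrivial element of $G = \Gal(E/F)$ acts on the right-hand side by swapping the two factors (since it interchanges $P$ and $\overline{P}$).

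Taking $G$-fixed points on $\co_F/J_\p \times \co_F/J_\p$ with the swap action yields precisely the diagonal copy of $\co_F/J_\p$, and passing to units gives $(\co_E/I_\p)^{\times, G} = (\co_F/J_\p)^\times$, so the asserted index equals $1$. I do not anticipate any real obstacle here; the only step requiring care is tracking the Galois action through the Chinese remainder decomposition and verifying the identification $E_P = F_\p$, which is immediate from the definition of a split prime.
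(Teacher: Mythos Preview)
Your proposal is correct and follows essentially the same approach as the paper: both use the Chinese remainder theorem to decompose $\co_E/I_\p$ into the two factors indexed by $P$ and $\overline{P}$, identify the Galois action as a swap (the paper writes it as $([x],[y])\mapsto([\overline{y}],[\overline{x}])$, which becomes a pure swap after your identification $\co_E/P^{a_P}\cong\co_F/\p^{a_P}$), and observe that the fixed subgroup is exactly the image of $(\co_F/J_\p)^\times$. Your version is slightly more explicit in spelling out $J_\p=\p^{a_P}$ and the identification $E_P=F_\p$, but the argument is the same.
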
 
\begin{proof}
    By the Chinese remainder theorem, we have  
    $$
    (\co_E / I_\p)^\times \cong \Big(\frac{\co_E}{P^{a_P}}\Big)^\times \times \Big(\frac{\co_E}{\overline{P}^{a_P}}\Big)^\times. 
    $$
    The action of complex conjugation on $(\co_E/I_\p)^\times$ corresponds to an action on the right-hand side by 
    $$
    ([x],[y])\mapsto ([\overline{y}], [\overline{x}]). 
    $$
    It is easy to see that the fixed subgroup
    $$
    \Big(\Big(\frac{\co_E}{P^{a_P}}\Big)^\times \times \Big(\frac{\co_E}{\overline{P}^{a_P}}\Big)^\times\Big)^G 
    $$
    is equal to the image of the natural inclusion 
    $$
    \iota:\Big(\frac{\co_F}{\p^{a_P}}\Big)^\times \to \Big(\frac{\co_E}{P^{a_P}}\Big)^\times \times \Big(\frac{\co_E}{\overline{P}^{a_P}}\Big)^\times,
    $$
    so we are done. 
\end{proof}
\begin{lemma}
    \label{lem-fixed-subgroup-index-inert}
    If $\p$ is inert in $E$, then 
    $$
    [(\co_E/I_\p)^{\times, G} : (\co_F/J_\p)^\times]  = 1. 
    $$
\end{lemma}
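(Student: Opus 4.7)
The plan is to show that the natural ring map $\co_F/J_\p \to \co_E/I_\p$ restricts to a bijection $(\co_F/J_\p)^\times \to (\co_E/I_\p)^{\times, G}$, which immediately gives that the index in question is $1$. Since $\p$ is inert, there is a unique prime $P$ of $\co_E$ lying over $\p$, with $P = \p\co_E$ and $e(\p) = 1$; in particular $I_\p = P^{a_P}$ and $J_\p = \p^{a_P}$. Writing $a = a_P$ for brevity, injectivity of the map $\co_F/\p^a \to \co_E/P^a$ is immediate, since its kernel is $(P^a \cap \co_F)/\p^a = 0$.

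The substance of the argument is to show that every $G$-fixed element of $\co_E/P^a$ lies in the image of $\co_F/\p^a$. First I would pick $\alpha \in \co_E$ whose reduction modulo $P$ generates the residue field extension $k(P)/k(\p)$. Applying Nakayama's lemma to the completion $\co_{E,P}$, which is a free $\co_{F,\p}$-module of rank $2$ by unramifiedness, shows that $\{1, \alpha\}$ is a free $\co_{F,\p}$-basis, and hence also a free $\co_F/\p^a$-basis of $\co_E/P^a = \co_{E,P}/\p^a\co_{E,P}$. Since the extension $E_P/F_\p$ is unramified, its discriminant $(\alpha-\bar\alpha)^2$ is a unit of $\co_{E,P}$, so $\alpha - \bar\alpha$ remains a unit modulo $P^a$. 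Writing any element as $u + v\alpha$ with $u, v \in \co_F/\p^a$, the fixed-point condition $u + v\alpha = u + v\bar\alpha$ becomes $v(\alpha - \bar\alpha) = 0$, forcing $v = 0$. Hence the $G$-fixed subring of $\co_E/P^a$ equals the image of $\co_F/\p^a$.

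The passage from fixed subring to fixed unit group is cheap: an element of $\co_F/\p^a$ is a unit if and only if its image in $\co_E/P^a$ is, because non-units of $\co_F/\p^a$ lie in $\p/\p^a$ and therefore map into $P/P^a$, which contains no units. The main point I will need to handle carefully is the possibility that $\p \mid 2$, where one cannot average by $\tfrac{1}{2}(1 + \sigma)$ to extract the $G$-fixed part of a free module. The discriminant-based argument above sidesteps this entirely, since unramifiedness directly forces $\alpha - \bar\alpha$ to be a unit in every residue characteristic, and no further serious obstacles should arise.
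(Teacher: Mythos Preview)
Your proposal is correct and follows essentially the same route as the paper: both localise at $\p$, choose a generator $\theta$ (your $\alpha$) of the residue field extension so that $\{1,\theta\}$ is an $\co_{F,\p}$-basis of $\co_{E,P}$, and then use that $\theta-\bar\theta$ is a unit (the paper phrases this as $v_E(\theta-\bar\theta)=0$, you as the discriminant being a unit) to force the $\theta$-coefficient of any $G$-fixed class to vanish. Your version is slightly more self-contained in that you invoke Nakayama directly rather than citing Lang, and you make explicit the passage from the fixed subring to the fixed unit group, but the underlying argument is the same.
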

\begin{proof}
    It is easy to see that we may instead prove the analogous result for quadratic, unramified extensions $E/F$ of $p$-adic fields. That is, we let $F$ be a finite-degree extension of $\Q_p$ and let $E/F$ be an unramified quadratic extension. Let $\p$ and $P$ be the maximal ideals of $\co_F$ and $\co_E$, respectively. Then, for each positive integer $a$, we need to show that 
    $$
    [(\co_E/P^a)^{\times, G} : (\co_F/\p^a)^\times] = 1,
    $$
    where $G = \Gal(E/F)$. The proof of \cite[Part~1, Chapter~3, Proposition~3]{lang1994algebraic} tells us that $\co_E = \co_F[\theta]$ for some $\theta \in \co_E$ such that $[\theta]$ is a generator of the residue field extension $\F_E/\F_F$. Let 
    $$
    \alpha \in (\co_E/P^a)^{\times, G}.
    $$
    Then $\alpha = [m + n\theta]$ for elements $m,n\in \co_F$. Write $\overline{\theta}$ for the conjugate of $\theta$ by $G$. Since $\alpha$ is fixed by $G$, we have 
    $$
    m + n\theta \equiv m + n\overline{\theta} \pmod{P^a},
    $$
    so 
    $$
    n(\theta - \overline{\theta}) \in P^a. 
    $$
    Since $[\theta] \not \in \F_F$ and the restriction map 
    $$
    \Gal(E/F) \to \Gal(\F_E/\F_F) 
    $$
    is surjective, we have $v_E(\theta - \overline{\theta}) = 0$, so $n \in \p^a$, and therefore $\alpha = [m]$, so $\alpha$ is in the image of the inclusion 
    $$
    (\co_F/\p^a)^\times \hookrightarrow (\co_E/P^a)^\times,
    $$
    and the result follows. 
\end{proof}
\begin{lemma}
    \label{lem-fixed-subgroup-index-ramified}
    If $\p$ lies over $2$ and ramifies in $E$, then 
    $$
    [(\co_E/I_\p)^{\times, G} : (\co_F/J_\p)^\times] = \begin{cases}
        \Nm(\p)^{\lfloor\frac{a_P}{2}\rfloor}\quad\text{if $a_P < v_\p(d_{E/F})$},
        \\
        \Nm(\p)^{\frac{v_\p(d_{E/F})}{2} - 1} \quad\text{if $v_\p(d_{E/F})$ is even and $a_P \geq v_\p(d_{E/F})$ is odd,}
        \\
        \Nm(\p)^{\frac{v_\p(d_{E/F})}{2}} \quad\text{if $v_\p(d_{E/F})$ is even and $a_P\geq v_\p(d_{E/F})$ is even},
        \\
        \Nm(\p)^{e(\p\mid 2)}\quad\text{if $v_\p(d_{E/F}) = 2e(\p\mid 2) + 1$ and $a_P\geq v_\p(d_{E/F})$} .
    \end{cases}
    $$
\end{lemma}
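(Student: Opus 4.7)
The statement is local at $\p$: by the Chinese remainder theorem, $\co_E/I_\p$ is isomorphic as a $G$-module to $A/\pi_E^{a_P}A$, where $A$ is the completion of $\co_E$ at $P$. Writing $B$ for the completion of $\co_F$ at $\p$, the ring $A$ is totally ramified of degree $2$ over $B$, so $A = B[\pi_E]$ is free over $B$ of rank $2$ with basis $\{1, \pi_E\}$, and the minimal polynomial $f(x) = x^2 + bx + c$ of $\pi_E$ over $B$ satisfies $v_B(c) = 1$ and $v_B(b) \geq 1$. The nontrivial element $\sigma$ of $G$ acts by $\sigma(\pi_E) = -b - \pi_E$, hence $\sigma(m + n\pi_E) = (m - bn) - n\pi_E$ for $m, n \in B$.

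I would next put $\pi_E$ into one of two normal forms. If $v_B(b) > e(\p\mid 2)$, then $b/2 \in B$ and replacing $\pi_E$ by $\pi_E + b/2$ produces a new uniformizer whose minimal polynomial has no linear term, so after a possible change of generator we have either $b = 0$ or $1 \leq v_B(b) \leq e(\p\mid 2)$. Since $v_\p(d_{E/F}) = v_P(\sigma(\pi_E) - \pi_E) = v_P(-b - 2\pi_E)$, the first case gives $v_\p(d_{E/F}) = 2e(\p\mid 2) + 1$ (odd) and the second gives $v_\p(d_{E/F}) = 2 v_B(b)$ (even); these are exactly the two regimes distinguished in the statement. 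The identity $\pi_E^2 \in \pi_F \cdot A^\times$ then yields the $B$-module descriptions $P^{2q} = \pi_F^q B \oplus \pi_F^q B\pi_E$ and $P^{2q+1} = \pi_F^{q+1}B \oplus \pi_F^q B\pi_E$.

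The counting is now mechanical. Writing a general element of $A/\pi_E^{a_P}A$ as $[m + n\pi_E]$ with $m \in B/\pi_F^{\lceil a_P/2 \rceil}$ and $n \in B/\pi_F^{\lfloor a_P/2 \rfloor}$, it is a unit iff $m \in (B/\pi_F^{\lceil a_P/2 \rceil})^\times$, and it is $\sigma$-fixed iff $bn + 2n\pi_E \in \pi_E^{a_P}A$, which unpacks via the description of $P^{a_P}$ above to the single valuation bound
$$v_B(n) \geq \max\bigl(\lceil a_P/2 \rceil - v_B(b),\ \lfloor a_P/2 \rfloor - e(\p\mid 2)\bigr),$$
with the convention $v_B(0) = \infty$ absorbing the Eisenstein case $b = 0$. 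The number of choices of $m$ is exactly $|(B/\pi_F^{\lceil a_P/2 \rceil})^\times|$, which cancels against the denominator of the index, leaving a pure power of $\Nm(\p)$.

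The four cases of the lemma then emerge by (i) separating the parity of $a_P$ (which selects $\lceil \cdot \rceil$ versus $\lfloor \cdot \rfloor$ above) and (ii) comparing $a_P$ with $v_\p(d_{E/F})$ (which selects which argument of the $\max$ dominates). The main obstacle will be keeping the bookkeeping clean — in particular, ensuring the normal-form reduction is carried out faithfully and that the off-by-one in Case~4 (where the odd value $v_\p(d_{E/F}) = 2e(\p\mid 2)+1$ feeds into an index of $\Nm(\p)^{e(\p\mid 2)}$) drops out correctly; no further ideas are required beyond careful case analysis.
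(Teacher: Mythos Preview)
Your proposal is correct and follows essentially the same approach as the paper: both reduce to the local totally ramified quadratic extension, choose a uniformiser $\rho$ with $\co_E = \co_F[\rho]$ and $v_P(\rho - \overline{\rho}) = v_\p(d_{E/F})$, and then parametrise $(\co_E/P^{a_P})^{\times,G}$ as pairs $([m],[n])$ with $m$ a unit and $n$ satisfying a valuation bound---your bound $\max(\lceil a_P/2\rceil - v_B(b),\,\lfloor a_P/2\rfloor - e_F)$ agrees case-by-case with the paper's $\max\{0,\lceil (a_P - v_\p(d_{E/F}))/2\rceil\}$. The only substantive difference is that you obtain the normal form of $\pi_E$ directly by completing the square (when $v_B(b) > e(\p\mid 2)$), whereas the paper cites external references (Tunnell, Cohen--Diaz y Diaz--Olivier, and a Hecke-type classification) for the existence of $\rho$; your route is more self-contained and arguably cleaner for this purpose.
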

\begin{proof}
    It is easy to see that we may instead prove the analogous result in the case where $E/F$ is a totally ramified quadratic extension of $2$-adic fields. That is, let $F$ be a finite-degree extension of the $2$-adic numbers $\Q_2$ and let $E/F$ be a totally ramified quadratic field extension. Let $\p$ and $P$ be the maximal ideals of $\co_F$ and $\co_E$, respectively. Then, for each positive integer $a$, we need to compute the index 
    $$
    [(\co_E/P^a)^{\times, G} : (\co_F/\p^{\lceil\frac{a}{2}\rceil})^\times].
    $$
    Write $e_F$ for the absolute ramification index of $F$, so $e_F$ corresponds to $e(\p\mid 2)$ in the statement of the lemma.
    We claim that there is an element $\rho \in \co_E$ such that the following three statements are true:
    \begin{enumerate}
        \item $v_E(\rho) = 1$.
        \item $\co_E = \co_F \oplus \co_F\cdot \rho$. 
        \item $v_E(\rho - \overline{\rho}) = v_F(d_{E/F})$. 
    \end{enumerate}
     By \cite[Lemma~4.3]{tunnell}, either $v_F(d_{E/F})$ is even with $2 \leq v_F(d_{E/F}) \leq 2e_F$, or $v_F(d_{E/F}) = 2e_F + 1$. In the former case, the existence of $\rho$ follows from \cite[Lemmas~3.4-3.5]{cdo}. If $v_F(d_{E/F}) = 2e_F + 1$, then taking $p=2$ in \cite[Theorem~2.4]{hecke-theorem}, we see that $E = F(\sqrt{\pi_F})$ for a uniformiser $\pi_F$ of $F$. Let $\rho = \sqrt{\pi_F}$. Since the polynomial $X^2 - \pi_F$ is Eisenstein over $F$, we have $\co_E = \co_F\oplus \co_F\cdot \rho$. The other two properties obviously hold. 
    
    Given the existence of $\rho$, we obtain a well-defined bijection 
    $$
    \varphi : (\co_F/\p^{\lceil \frac{a}{2}\rceil})^\times \times \Big(\p^{\max\big\{0, \big\lceil \frac{a - v_F(d_{E/F})}{2}\big\rceil\big\}} / \p^{\lfloor \frac{a}{2}\rfloor}\Big) \to \Big(\frac{\co_E}{P^{a}}\Big)^{\times, G},\quad ([m],[n])\mapsto [m + n\rho],
    $$
    and the result follows. 
\end{proof}
\begin{lemma}
    \label{lem-fixed-subgroup-index-divides-norm}
    We have 
    $$
    [(\co_E/I_2)^{\times, G} : (\co_F/J_2)^\times]^2 \mid \Nm(d_{E/F, 2}). 
    $$
\end{lemma}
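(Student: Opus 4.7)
The plan is to reduce to a prime-by-prime computation and then do a short case analysis using the previous lemmas. First I would apply Lemma~\ref{lem-fixed-subgroup-index-product-formula} to write
$$
[(\co_E/I_2)^{\times, G} : (\co_F/J_2)^\times] = \prod_{\p \mid 2} [(\co_E/I_\p)^{\times, G} : (\co_F/J_\p)^\times],
$$
and then use Lemmas~\ref{lem-fixed-subgroup-index-split} and \ref{lem-fixed-subgroup-index-inert} to discard the split and inert primes, leaving only the product over those $\p\mid 2$ that ramify in $E$. Since $\Nm(d_{E/F,2}) = \prod_{\p\mid 2,\,\p\text{ ram.\ in }E} \Nm(\p)^{v_\p(d_{E/F})}$, it then suffices to prove the local divisibility
$$
[(\co_E/I_\p)^{\times, G} : (\co_F/J_\p)^\times]^2 \mid \Nm(\p)^{v_\p(d_{E/F})}
$$
for each ramified prime $\p$ over $2$.

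For each such prime, I would match against the four cases of Lemma~\ref{lem-fixed-subgroup-index-ramified} and verify the desired divisibility by comparing exponents of $\Nm(\p)$. In Case~1 the local index is $\Nm(\p)^{\lfloor a_P/2\rfloor}$ with $a_P < v_\p(d_{E/F})$, so $2\lfloor a_P/2\rfloor \leq a_P \leq v_\p(d_{E/F}) - 1$. In Case~2 the square has exponent $v_\p(d_{E/F}) - 2$, and in Case~3 it has exponent exactly $v_\p(d_{E/F})$. In Case~4 the square has exponent $2e(\p\mid 2) = v_\p(d_{E/F}) - 1$. All four are bounded above by $v_\p(d_{E/F})$, so the divisibility holds in every case. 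Multiplying these divisibilities over the ramified primes $\p\mid 2$ gives the global claim.

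There is no real obstacle here, as the bulk of the work has already been done in Lemmas~\ref{lem-fixed-subgroup-index-product-formula}--\ref{lem-fixed-subgroup-index-ramified}; the only thing to watch is that each of the four exponent bounds in Lemma~\ref{lem-fixed-subgroup-index-ramified} really is at most $v_\p(d_{E/F})/2$, and that Case~1 (where $a_P < v_\p(d_{E/F})$) does not slip through -- the key observation being $2\lfloor a_P/2\rfloor \leq a_P$, which takes care of both parities of $a_P$ simultaneously.
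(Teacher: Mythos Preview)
Your proposal is correct and follows exactly the approach of the paper, which simply states that the lemma is immediate from Lemmas~\ref{lem-fixed-subgroup-index-product-formula}, \ref{lem-fixed-subgroup-index-split}, \ref{lem-fixed-subgroup-index-inert}, and \ref{lem-fixed-subgroup-index-ramified}. You have just written out the case-by-case exponent comparison that the paper leaves implicit, and all four cases are handled correctly.
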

\begin{proof}
    This is immediate from Lemmas~\ref{lem-fixed-subgroup-index-product-formula}, \ref{lem-fixed-subgroup-index-split}, \ref{lem-fixed-subgroup-index-inert}, and \ref{lem-fixed-subgroup-index-ramified}.  
\end{proof}

\begin{lemma}[Bounding Tate cohomology]
    \label{lem-bound-on-tate-cohomology}
    We have 
    $$
    \# \big(H^1(E^{I,1})\big)^2 \mid 2^{2\omega(d_{E/F, 2})} \cdot \Nm(d_{E/F,2}).
    $$
\end{lemma}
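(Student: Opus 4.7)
The plan is to assemble the result directly from Lemma~\ref{lem-valloni-bound-on-tate-cohomology} and Lemma~\ref{lem-fixed-subgroup-index-divides-norm}. Valloni's bound gives
$$
\#\big(H^1(E^{I,1})\big) \;\Big|\; [(\co_E/I_2)^{\times, G} : (\co_F/J_2)^\times] \cdot \prod_{\p \mid J_2} e(\p),
$$
so squaring both sides yields a divisibility by the product of the two squared factors. The second factor, $[(\co_E/I_2)^{\times, G} : (\co_F/J_2)^\times]^2$, already divides $\Nm(d_{E/F,2})$ by Lemma~\ref{lem-fixed-subgroup-index-divides-norm}, so it remains only to bound $\prod_{\p \mid J_2} e(\p)^2$.

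Since $E/F$ is a quadratic extension, every ramification index $e(\p)$ is either $1$ or $2$, and $e(\p) = 2$ occurs exactly when $\p$ ramifies in $E$. Among primes $\p$ of $\co_F$ dividing $J_2$, those with $e(\p) = 2$ are precisely the primes $\p \mid 2$ that are ramified in $E$ and that divide $J$. In particular, each such $\p$ divides $d_{E/F,2}$, so the number of such primes is at most $\omega(d_{E/F,2})$. It follows that
$$
\prod_{\p \mid J_2} e(\p) \;\Big|\; 2^{\omega(d_{E/F,2})},
$$
and therefore $\prod_{\p \mid J_2} e(\p)^2 \mid 2^{2\omega(d_{E/F,2})}$.

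Combining these two divisibilities gives the desired conclusion
$$
\#\big(H^1(E^{I,1})\big)^2 \;\Big|\; 2^{2\omega(d_{E/F,2})} \cdot \Nm(d_{E/F,2}).
$$
There is no real obstacle here: all the hard work was done in proving the structural lemmas earlier in Section~\ref{sec-totient-bound} (in particular the case-by-case analysis of $[(\co_E/I_\p)^{\times,G} : (\co_F/J_\p)^\times]$ for ramified $\p \mid 2$). This lemma is essentially bookkeeping, merely packaging those results into a single clean divisibility statement that will feed into the Brauer group bounds of Section~\ref{sec-deduce-norm-bound}.
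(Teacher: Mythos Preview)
Your proof is correct and follows essentially the same approach as the paper: both arguments combine Lemma~\ref{lem-valloni-bound-on-tate-cohomology} with Lemma~\ref{lem-fixed-subgroup-index-divides-norm}, and both bound $\prod_{\p\mid J_2} e(\p)$ by $2^{\omega(d_{E/F,2})}$ via the observation that a ramified prime dividing $J_2$ must divide $d_{E/F,2}$. Your write-up simply spells out a few more of the easy steps.
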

\begin{proof}
    Since each ramified prime dividing $J_2$ also divides $d_{E/F,2}$, we have 
    $$
    \prod_{\p \mid J_2} e(\p) \mid 2^{\omega(d_{E/F,2})}. 
    $$
    The result then follows from Lemmas~\ref{lem-valloni-bound-on-tate-cohomology} and \ref{lem-fixed-subgroup-index-divides-norm}. 
\end{proof}

\begin{corollary}
    \label{cor-explicit-totient-bound}
    For each $k$-permissible ideal $I$ of $\co_E$, we have 
    $$
    \frac{\phi_E(I)}{\phi_F(J)} \leq [k : \Q] \cdot M_E,
    $$
    where $M_E$ is the explicit constant defined in Section~\ref{sec-intro}.
\end{corollary}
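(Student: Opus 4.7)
The plan is to simply assemble the bounds from the lemmas proved earlier in Section~\ref{sec-totient-bound} and plug them into the defining inequality of a $k$-permissible ideal. Starting from Definition~\ref{defi-permissible-ideal}, we have
$$
\frac{\phi_E(I)}{\phi_F(J)} \leq \frac{1}{2}\cdot [k : \Q] \cdot \frac{[\co_E^\times : \co_E^I]}{[\co_F^\times : N_{E/F}(\co_E^I)]} \cdot \frac{h_F}{h_E} \cdot e(E/F,J) \cdot \lvert H^1(E^{I,1})\rvert,
$$
so it suffices to bound the three structural factors on the right: the ratio of unit subgroup indices, the ramification product $e(E/F,J)$, and the Tate cohomology group $H^1(E^{I,1})$.

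For the unit subgroup ratio, I would use Lemma~\ref{lem-bound-on-subgroup-index-ratio}: since the integer $m$ appearing there is positive, we get
$$
\frac{[\co_E^\times : \co_E^I]}{[\co_F^\times : N_{E/F}(\co_E^I)]} \leq \frac{[\co_E^\times : \co_F^\times]}{2^{[F:\Q] - 1}}.
$$
For the ramification factor, Lemma~\ref{lem-bound-on-e-E-F-J} gives $e(E/F, J) \leq 2^{[F:\Q] + \omega(d_{E/F})}$. For the Tate cohomology factor, Lemma~\ref{lem-bound-on-tate-cohomology} gives
$$
\lvert H^1(E^{I,1})\rvert \leq 2^{\omega(d_{E/F,2})} \cdot \sqrt{\Nm(d_{E/F,2})}.
$$

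Substituting these three bounds into the $k$-permissibility inequality yields
$$
\frac{\phi_E(I)}{\phi_F(J)} \leq \frac{1}{2}\cdot [k:\Q] \cdot \frac{[\co_E^\times : \co_F^\times]}{2^{[F:\Q]-1}} \cdot \frac{h_F}{h_E} \cdot 2^{[F:\Q] + \omega(d_{E/F})} \cdot 2^{\omega(d_{E/F,2})} \cdot \sqrt{\Nm(d_{E/F,2})}.
$$
The powers of $2$ collapse neatly: $\tfrac{1}{2} \cdot 2^{-([F:\Q]-1)} \cdot 2^{[F:\Q]} = 1$, leaving precisely $2^{\omega(d_{E/F}) + \omega(d_{E/F,2})}$. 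Comparing with the definition of $M_E$ in Section~\ref{sec-intro} gives the stated bound $[k:\Q]\cdot M_E$.

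There is no real obstacle here; the proof is purely an accounting step. The only thing to check carefully is that the powers of $2$ combine correctly to reproduce the factor $2^{\omega(d_{E/F}) + \omega(d_{E/F,2})}$ appearing in $M_E$, which they do. The content of the corollary is entirely contained in the preceding lemmas, and its purpose is to record a single clean inequality that will be used repeatedly in Section~\ref{sec-deduce-norm-bound}.
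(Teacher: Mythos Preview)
Your proof is correct and follows exactly the approach the paper intends: the paper's own proof is simply ``This is immediate from Lemmas~\ref{lem-bound-on-subgroup-index-ratio}, \ref{lem-bound-on-e-E-F-J}, and \ref{lem-bound-on-tate-cohomology},'' and you have written out precisely that computation, including the correct collapse of the powers of $2$.
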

\begin{proof}
    This is immediate from Lemmas~\ref{lem-bound-on-subgroup-index-ratio}, \ref{lem-bound-on-e-E-F-J}, and \ref{lem-bound-on-tate-cohomology}. 
\end{proof}

\section{Deducing a bound on the norm}
\label{sec-deduce-norm-bound}

Again, throughout this section, we fix a number field $k$, a CM field $E$, and a $k$-permissible ideal $I$ of $\co_E$. Set $J = I \cap F$ and, for each prime $\p$ of $F$, define the ideals $I_\p$ and $J_\p$ as in Section~\ref{sec-totient-bound}.

\begin{lemma}
    \label{lem-totient-function-formula}
    Let $M$ be a number field and let $\mathfrak{a}$ be an ideal of $\co_M$. Let $\mathfrak{a}$ have prime factorisation 
    $$
    \mathfrak{a} = \p_1^{a_1}\ldots \p_r^{a_r},
    $$
    where the $\p_i$ are distinct prime ideals of $\co_M$ and the $a_i$ are positive integers. Then 
    $$
    \phi_M(\mathfrak{a}) = \prod_{i=1}^r \Nm(\p_i)^{a_i-1}(\Nm(\p_i) - 1). 
    $$
\end{lemma}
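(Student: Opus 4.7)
The plan is to carry out the standard multiplicativity argument, adapted to number rings. First I would observe that since $\mathfrak{a} = \prod_i \p_i^{a_i}$ with the $\p_i$ pairwise coprime, the Chinese remainder theorem gives a ring isomorphism
$$
\co_M/\mathfrak{a} \;\cong\; \prod_{i=1}^{r} \co_M/\p_i^{a_i},
$$
which restricts to an isomorphism of unit groups. Taking cardinalities yields
$$
\phi_M(\mathfrak{a}) = \prod_{i=1}^{r} \phi_M(\p_i^{a_i}),
$$
so it suffices to prove the single-prime-power formula $\phi_M(\p^a) = \Nm(\p)^{a-1}(\Nm(\p)-1)$.

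Next, for a fixed prime $\p$ and exponent $a \geq 1$, I would note that an element $x \in \co_M/\p^a$ is a unit if and only if its image in the residue field $\co_M/\p$ is nonzero, equivalently if and only if $x \notin \p/\p^a$. This is because $\co_M/\p^a$ is a local ring with maximal ideal $\p/\p^a$ (since $\p$ is the unique prime of $\co_M$ containing $\p^a$). Hence
$$
\#(\co_M/\p^a)^\times = \#(\co_M/\p^a) - \#(\p/\p^a).
$$
By definition $\Nm(\p^a) = \#(\co_M/\p^a) = \Nm(\p)^a$ (multiplicativity of the norm on ideals of $\co_M$), and from the short exact sequence $0 \to \p/\p^a \to \co_M/\p^a \to \co_M/\p \to 0$ we read off $\#(\p/\p^a) = \Nm(\p)^{a-1}$. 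Substituting gives
$$
\phi_M(\p^a) = \Nm(\p)^a - \Nm(\p)^{a-1} = \Nm(\p)^{a-1}(\Nm(\p)-1).
$$

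Combining this with the CRT factorisation yields the claimed formula. There is no real obstacle here; the only step worth double-checking is the identity $\#(\p/\p^a) = \Nm(\p)^{a-1}$, which one can justify either by the above exact sequence together with multiplicativity of the norm, or alternatively by inductively using that each successive quotient $\p^i/\p^{i+1}$ is a one-dimensional vector space over $\co_M/\p$ (a standard fact for Dedekind domains, since $\p^i$ is invertible and locally principal).
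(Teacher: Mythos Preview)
Your proof is correct and follows exactly the approach the paper has in mind: the paper's own proof is simply the one-line remark ``This follows easily from the Chinese remainder theorem,'' and you have spelled out precisely that argument, including the single-prime-power computation.
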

\begin{proof}
    This follows easily from the Chinese remainder theorem. 
\end{proof}
For prime ideals $P$ of $\co_E$, define the integers $a_P$ as in Section~\ref{sec-totient-bound}.
\begin{lemma}
    \label{lem-totient-ratio-at-p}
    Let $\p$ be a prime ideal of $\co_F$ dividing $J$, and let $P$ be a prime ideal of $\co_E$ lying over $\p$. Then we have
    $$
    \frac{\phi_E(I_\p)}{\phi_F(J_\p)} = \begin{cases}
        \Nm(P)^{a_P - 1}(\Nm(P) - 1)  \quad \text{if $\p$ splits in $E$},
        \\
        \Nm(P)^{\frac{a_P-1}{2}}(\sqrt{\Nm(P)} + 1)\quad \text{if $\p$ is inert in $E$},
        \\
        \Nm(P)^{\lfloor \frac{a_P}{2}\rfloor} \quad \text{if $\p$ is ramified in $E$}.
    \end{cases}
    $$
\end{lemma}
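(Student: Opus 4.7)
The plan is to reduce the lemma to a direct application of Lemma~\ref{lem-totient-function-formula}, followed by a three-way case split on the splitting behaviour of $\p$ in $E$. In each case, the key is to express $\Nm(P)$ and the exponent appearing in $J_\p$ in terms of $\Nm(\p)$, $a_P$, and the ramification index $e(\p)$, then cancel.

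First I would recall that, by construction, $I_\p$ is either $P^{a_P}\overline{P}^{a_P}$ (split case) or $P^{a_P}$ (inert/ramified case), whereas $J_\p = \p^{\lceil a_P/e(\p)\rceil}$. Applying Lemma~\ref{lem-totient-function-formula} to each gives
\[
\phi_F(J_\p) = \Nm(\p)^{\lceil a_P/e(\p)\rceil - 1}(\Nm(\p) - 1),
\]
and analogous product formulas for $\phi_E(I_\p)$. The split case requires the product form of Lemma~\ref{lem-totient-function-formula} applied to two distinct primes of $\co_E$, while the inert and ramified cases only need the single-prime formula.

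Next I would handle the three cases in turn. If $\p$ splits, then $e(\p) = 1$, $\Nm(P) = \Nm(\overline{P}) = \Nm(\p)$, and $\phi_E(I_\p) = \bigl[\Nm(P)^{a_P - 1}(\Nm(P)-1)\bigr]^2$, so dividing by $\phi_F(J_\p) = \Nm(P)^{a_P - 1}(\Nm(P)-1)$ yields $\Nm(P)^{a_P - 1}(\Nm(P)-1)$. If $\p$ is inert, then $e(\p) = 1$ and $\Nm(P) = \Nm(\p)^2$; writing $\Nm(P) - 1 = (\sqrt{\Nm(P)} - 1)(\sqrt{\Nm(P)} + 1)$ cancels the factor $\Nm(\p) - 1 = \sqrt{\Nm(P)} - 1$ appearing in $\phi_F(J_\p)$, leaving $\Nm(P)^{(a_P - 1)/2}(\sqrt{\Nm(P)} + 1)$. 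If $\p$ is ramified, then $e(\p) = 2$, $\Nm(P) = \Nm(\p)$, and the factors $\Nm(\p) - 1$ cancel; the remaining exponent is $a_P - 1 - (\lceil a_P/2\rceil - 1) = a_P - \lceil a_P/2\rceil = \lfloor a_P/2\rfloor$, giving $\Nm(P)^{\lfloor a_P/2\rfloor}$.

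There is no real obstacle here; the only point requiring care is the bookkeeping of $\Nm(P)$ versus $\Nm(\p)$ in the inert case, where the square root appears naturally from the factorisation $\Nm(\p)^2 - 1 = (\Nm(\p)-1)(\Nm(\p)+1)$. Once this is tracked correctly, each case follows by a single line of arithmetic.
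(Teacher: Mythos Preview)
Your proposal is correct and is exactly the approach the paper takes: the paper's proof simply says ``immediate from Lemma~\ref{lem-totient-function-formula} and the definitions of $I_\p$ and $J_\p$,'' and your write-up is a faithful unpacking of that one line, with the three-way case split and the arithmetic done in full.
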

\begin{proof}
    This is immediate from Lemma~\ref{lem-totient-function-formula} and the definitions of $I_\p$ and $J_\p$. 
\end{proof}

\begin{lemma}
    \label{lem-bounded-prod-of-Np-1-implies-bounded-prod-of-quotients}
    Let $S$ be a finite set of prime ideals of $\co_E$, and suppose that 
    $$
    \prod_{P \in S} (\Nm(P) - 1) \leq T,
    $$
    for some real number $T$. Then we have 
    $$
    \prod_{P \in S} \frac{\Nm(P)}{\Nm(P) - 1} \leq \Psi\Big(\Phi^{-1}\Big(\frac{\log T}{[E : \Q]}\Big)\Big)^{[E:\Q]}.
    $$
\end{lemma}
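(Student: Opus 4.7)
The plan is to first reduce the statement to a constrained optimization over multisets of rational primes with bounded multiplicities, and then solve that optimization by a greedy/exchange argument. Throughout, let $p_0 < p_1 < \ldots$ denote the rational primes in order.

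\emph{Step 1: Reduction to rational primes.} For each $P \in S$ with $\Nm(P) = p^{f_P}$, I would use the elementary inequality $p^{f_P} - 1 \geq (p-1)^{f_P}$ (equivalent to $p^{f_P}/(p^{f_P} - 1) \leq (p/(p-1))^{f_P}$) to replace $P$ by $f_P$ virtual copies of $p$. Setting $m_p = \sum_{P \in S,\, P \mid p} f_P$, the hypothesis yields the stronger constraint $\prod_p (p-1)^{m_p} \leq T$, while the quantity to be bounded is majorized by $\prod_p (p/(p-1))^{m_p}$. Since $\sum_{P \mid p} e_P f_P = [E:\Q]$ for each rational prime $p$, I also obtain the cap $m_p \leq [E:\Q]$.

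\emph{Step 2: Exchange argument.} Taking logarithms, the problem becomes: maximize $\sum_p m_p \log(p/(p-1))$ subject to $\sum_p m_p \log(p-1) \leq \log T$ and $0 \leq m_p \leq [E:\Q]$. Since $p \mapsto \log(p/(p-1))$ is strictly decreasing in $p$ while $p \mapsto \log(p-1)$ is non-decreasing, swapping any copy of a larger prime for a smaller one (subject to the multiplicity cap) strictly increases the objective and weakly relaxes the constraint. Hence I may assume the maximizer is of greedy form: $m_{p_i} = [E:\Q]$ for $0 \leq i \leq N-1$, $0 \leq m_{p_N} \leq [E:\Q]$, and $m_{p_i} = 0$ for $i > N$, for some $N \geq 0$.

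\emph{Step 3: Bounding $N$ and concluding.} Assuming $N \geq 1$ (the case $N = 0$ being trivial since then the objective is at most $[E:\Q]\log 2 \leq [E:\Q]\log\Psi(n)$), the constraint reads $[E:\Q]\,\Phi(N-1) + m_{p_N}\log(p_N - 1) \leq \log T$. Setting $n = \Phi^{-1}(\log T / [E:\Q])$, I would argue that $N \leq n+1$: otherwise $N - 1 \geq n+1$ would force $\Phi(N-1) > \log T/[E:\Q]$, hence $[E:\Q]\,\Phi(N-1) > \log T$, contradicting the constraint since $m_{p_N}\log(p_N - 1) \geq 0$. With $N \leq n+1$, the objective is bounded by
\[
[E:\Q]\sum_{i=0}^{N}\log\frac{p_i}{p_i-1} \leq [E:\Q]\sum_{i=0}^{n+1}\log\frac{p_i}{p_i-1} = [E:\Q]\log\Psi(n),
\]
and exponentiation gives the claimed inequality.

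The main obstacle will be Step 3, specifically justifying the ``$+1$'' appearing in the upper summation index in $\Psi(n) = \prod_{i=0}^{n+1}p_i/(p_i-1)$ rather than $\prod_{i=0}^{n}$: this extra factor is precisely what absorbs the partial contribution $m_{p_N}\log(p_N - 1)$ from the boundary prime $p_N$ in the greedy solution when $m_{p_N}$ is strictly less than $[E:\Q]$. The rest is bookkeeping with the pseudoinverse $\Phi^{-1}$.
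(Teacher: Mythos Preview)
Your argument is correct, but it takes a genuinely different route from the paper's proof. The paper orders $S = \{P_1,\ldots,P_r\}$ by norm and uses the pigeonhole fact that $E$ has at most $[E:\Q]$ primes above each rational prime to obtain the pointwise bound $\Nm(P_i) \geq p_{\lfloor (i-1)/[E:\Q]\rfloor}$; this single inequality is then applied twice, once to bound $\prod_P (\Nm(P)-1)$ from below (yielding $\lfloor \#S/[E:\Q]\rfloor - 1 \leq \Phi^{-1}(\log T/[E:\Q])$) and once to bound $\prod_P \Nm(P)/(\Nm(P)-1)$ from above by $\Psi(\lceil \#S/[E:\Q]\rceil - 2)^{[E:\Q]}$. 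No optimisation or exchange step appears. By contrast, you first pass to rational primes via $p^f - 1 \geq (p-1)^f$, reformulate the problem as maximising a linear objective under a linear constraint with box constraints $m_p \leq [E:\Q]$, and then invoke a swap argument to reduce to the greedy configuration. Your approach makes the extremal configuration explicit and explains transparently why the ``$+1$'' in the index range of $\Psi$ is needed (to absorb the partially filled boundary prime); the paper's approach is shorter and sidesteps the optimisation framework entirely, but the role of that extra factor is somewhat hidden in the floor/ceiling bookkeeping. Both rely on the same underlying arithmetic input, namely the bound $\sum_{P\mid p} f_P \leq [E:\Q]$.
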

\begin{proof}
    Let 
    $
    S = \{P_1,\ldots, P_r\},
    $
    where $\Nm(P_i) \leq \Nm(P_{i+1})$ for each $i$. Since $E$ has at most $[E:\Q]$ primes lying over each rational prime, we have 
    $$
    \Nm(P_i) \geq p_{\lfloor\frac{i-1}{[E:\Q]}\rfloor}
    $$
    for each $i$, and therefore 
    $$
    \prod_{P \in S} (\Nm(P) - 1) \geq \Big(\prod_{j=0}^{\lfloor \frac{\# S}{[E : \Q]} \rfloor - 1} (p_j - 1)\Big)^{[E : \Q]}.
    $$
    It follows that 
    $$
    [E : \Q]\cdot  \Phi\Big(\Big\lfloor\frac{\# S}{[E:\Q]}\Big\rfloor - 1\Big) \leq \log T,
    $$
    so 
    $$
    \Big\lfloor\frac{\# S}{[E:\Q]} \Big\rfloor - 1\leq \Phi^{-1}\Big(\frac{\log T}{[E:\Q]}\Big).
    $$
    Since $\Nm(P_i) \geq p_{\lfloor \frac{i - 1}{[E:\Q]}\rfloor}$ for each $i$, it follows that 
    $$
    \frac{\Nm(P_i)}{\Nm(P_i) - 1} \leq \frac{p_{\lfloor \frac{i-1}{[E:\Q]}\rfloor}}{p_{\lfloor \frac{i-1}{[E:\Q]}\rfloor} - 1},
    $$
    so 
    $$
    \prod_{P \in S} \frac{\Nm(P)}{\Nm(P) - 1} \leq \Big(\prod_{j=0}^{\lceil \frac{\# S}{[E : \Q]} \rceil - 1} \frac{p_j}{p_j - 1}\Big)^{[E : \Q]} = \Psi\Big(\Big\lceil \frac{\# S}{[E:\Q]}\Big\rceil - 2\Big)^{[E:\Q]}.
    $$
    The result follows since 
    $$
    \Big\lceil \frac{\# S}{[E:\Q]} \Big\rceil - 2 \leq \Big\lfloor \frac{\# S}{[E:\Q]}\Big\rfloor - 1 \leq \Phi^{-1}\Big(\frac{\log T}{[E:\Q]}\Big). 
    $$
\end{proof}

\begin{proof}
    [Proof of Theorem~\ref{thm-bound-with-Phi-and-Psi}]
    By Corollary~\ref{cor-brauer-bounded-by-Nm-I}, we have 
    $$
    \#\big(\Br(X) / \Br_1(X)\big) \leq \Nm(I)  
    $$
    for some $k$-permissible ideal $I$ of $\co_E$. Using multiplicativity of the totient, Corollary~\ref{cor-explicit-totient-bound}, and Lemma~\ref{lem-totient-ratio-at-p}, it is easy to see that 
    \begin{align*}
    \Nm(I) &\leq \Big(\frac{\phi_E(I)}{\phi_F(J)}\Big)^2 \cdot \prod_{\substack{P \mid I \\ \mathrm{split}}} \frac{\Nm(P)}{\Nm(P) - 1} \cdot \Nm(\rad(d_{E/F}))
    \\
    &\leq [k:\Q]^2\cdot M_E^2 \cdot \prod_{\substack{P \mid I \\ \mathrm{split}}} \frac{\Nm(P)}{\Nm(P) - 1} \cdot \Nm(\rad(d_{E/F})),
    \end{align*}
    where the product is over all prime ideals $P$ dividing $I$ that are split in $E/F$ (i.e. over all $P\mid I$ with $\overline{P} \neq P$). Also using Corollary~\ref{cor-explicit-totient-bound} and Lemma~\ref{lem-totient-ratio-at-p}, we have 
    $$
    \prod_{\substack{P \mid I \\ \mathrm{split}}} (\Nm(P) - 1) \leq \Big(\frac{\phi_E(I)}{\phi_F(J)}\Big)^2 \leq [k : \Q]^2 \cdot M_E^2,
    $$
    so Lemma~\ref{lem-bounded-prod-of-Np-1-implies-bounded-prod-of-quotients} implies that 
    $$
    \prod_{\substack{P \mid I \\ \mathrm{split}}} \frac{\Nm(P)}{\Nm(P) - 1} \leq \Psi\Big(\Phi^{-1}\Big(\frac{2\log([k:\Q]M_E)}{[E : \Q]}\Big)\Big)^{[E:\Q]},
    $$
    and the result follows from Corollary~\ref{cor-brauer-bounded-by-Nm-I}. 
\end{proof}

\begin{proof}
    [Proof of Theorem~\ref{thm-bounding-Psi-Phi-inverse}]
    Let $n = \Phi^{-1}(t)$, so $n$ is a nonnegative integer. We have $\log(p_0 - 1) = 0$ and $\log(p_i - 1) \geq \log 2$ for all $i \geq 1$, so  
    $$
    t \geq \Phi(n) \geq n\log 2,
    $$
    so $n \leq \frac{t}{\log 2}$. If $n \leq L_\delta$, then obviously
    $$
    \Psi(n) \leq \Psi(L_\delta) \leq \Psi(L_\delta)\cdot (1+\delta)^n. 
    $$
    If $n > L_\delta$, then 
    \begin{align*}
        \Psi(n) &= \Psi(L_\delta) \cdot \prod_{i=L_\delta + 2}^{n+1} \frac{p_i}{p_i-1} 
        \\
        &\leq \Psi(L_\delta)\cdot (1 + \delta)^{n - L_\delta}
        \\
        &\leq \Psi(L_\delta) \cdot (1 + \delta)^{n+1},
    \end{align*}
    so we are done. 
\end{proof}
\begin{lemma}
    \label{lem-eratosthenes-time-complexity}
    Let $T$ be a positive real number. Using the sieve of Eratosthenes, one can compute all prime numbers less than $T$ with time complexity $O(T\log\log T)$. 
\end{lemma}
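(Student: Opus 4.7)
The plan is to describe the sieve of Eratosthenes concretely as an algorithm, account for the work done at each stage, and then invoke a classical analytic number theory bound to collapse the sum of reciprocals of primes into the claimed $\log\log T$ factor.

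First I would set up the algorithm: initialise a boolean array indexed by the integers from $2$ to $\lfloor T\rfloor$, marking each entry as ``prime''. Then iterate over integers $p$ from $2$ up to $\lfloor\sqrt{T}\rfloor$; if $p$ is still marked prime, cross out every proper multiple of $p$, namely $2p, 3p, \ldots$ up to $\lfloor T\rfloor$. At the end, the entries still marked prime are exactly the primes less than $T$ (correctness is standard: any composite $n < T$ has a prime factor $\leq\sqrt{n}\leq\sqrt{T}$, so it gets crossed out).

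Next I would count operations. Initialisation is $O(T)$. For each prime $p\leq\sqrt{T}$, the crossing-out step touches $\lfloor T/p\rfloor - 1$ indices, contributing $O(T/p)$ work. For composite $p\leq\sqrt{T}$ we do only a constant check. Hence the total running time is
\[
O(T) \;+\; \sum_{\substack{p \leq \sqrt{T}\\ p\text{ prime}}} O\!\left(\frac{T}{p}\right) \;=\; O(T) \;+\; O\!\left(T\sum_{p\leq\sqrt{T}} \frac{1}{p}\right).
\]

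Finally I would apply Mertens' second theorem, which states $\sum_{p\leq x} 1/p = \log\log x + M + o(1)$ as $x\to\infty$ for an explicit constant $M$. Taking $x = \sqrt{T}$, we obtain $\sum_{p\leq\sqrt{T}} 1/p = \log\log\sqrt{T} + O(1) = \log\log T + O(1)$, which gives the desired bound $O(T\log\log T)$. There is no real obstacle here beyond citing Mertens; the only subtlety is to notice that the outer loop only needs to run up to $\sqrt{T}$ (not $T$), which is what keeps the sum of reciprocals inside the logarithm range where Mertens applies cleanly.
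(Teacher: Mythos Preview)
Your argument is the standard, correct derivation: set up the sieve, bound the crossing-out work by $T\sum_{p\le\sqrt{T}}1/p$, and invoke Mertens' second theorem to obtain $O(T\log\log T)$. There is no gap.

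The paper, however, does not prove this lemma at all: it simply refers the reader to a page in a survey on prime sieves. So your approach is genuinely different in that you supply a self-contained argument, whereas the paper treats the result as a black-box citation. What your version buys is that the reader need not chase an external reference for a half-page computation; what the paper's version buys is brevity, since this lemma is peripheral to the main line of argument and the bound is classical. Either is acceptable here.
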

\begin{proof}
    See \cite[Page~4]{sorensen-sieve}.
\end{proof}

\begin{proof}
    [Proof of Theorem~\ref{thm-time-complexity-C-eps-delta}]
    We must find the smallest positive integer $L_\delta$ such that 
    $$
    p_{L_\delta + 2} \geq 1 + \frac{1}{\delta}.
    $$
    Using the well-known bounds 
    $$
    n(\log n + \log \log n - 1) < p_{n-1} < n(\log n + \log\log n),
    $$
    valid for $n\geq 6$,
    we can quickly find an $L_0$ such that $L_0 \log L_0 \sim \frac{1}{\delta}$ and $p_{L_0 + 2} \geq 1 + \frac{1}{\delta}$. Since $L_0\log L_0 \sim \frac{1}{\delta}$, Lemma~\ref{lem-eratosthenes-time-complexity} tells us that we can compute $p_i$ for every $i \leq L_0$ with time complexity $O(L_0\log\log L_0)$. Having computed $p_i$ for each $i \leq L_0$, we can find the smallest $i$ with $p_i \geq 1 + \frac{1}{\delta}$ by performing a binary search on $\Z \cap [0, L_0]$, which has time complexity $O(\log L_0)$. Clearly 
    $$
    \log L_0 \ll L_0 \log\log L_0 \ll L_0 \log L_0 \sim \frac{1}{\delta},
    $$
    so we can determine $L_\delta$ with the claimed time complexity. Given that the required $p_i$ have already been computed, we can then compute $\Psi(L_\delta)$ with time complexity $O(L_\delta)$, and we know that $L_\delta \ll \frac{1}{\delta}$, so we are done. 
\end{proof}
\begin{proof}
    [Proof of Corollary~\ref{cor-to-bound-with-Psi-Phi-inverse}]
    This is immediate from Theorems~\ref{thm-bound-with-Phi-and-Psi} and \ref{thm-bounding-Psi-Phi-inverse}. 
\end{proof}
\begin{proof}
    [Proof of Theorem~\ref{thm-explicit-bound-without-phi-and-psi}]
    This follows from Corollary~\ref{cor-to-bound-with-Psi-Phi-inverse}, where we take $\delta = 1/2$. 
\end{proof}

\printbibliography

\end{document}